\numberwithin{equation}{section}
\newtheorem{theorem}{Theorem}[section]                   
\newtheorem{lemma}{Lemma}[section]
\newtheorem{coro}{Corollary}[section]
\newtheorem{remark}{Remark}[section]
\begin{document}

\title{\bf A New Approach for Solving Delayed Forward and Backward Stochastic Differential Equations
}

\date{}

\author{Tianfu Ma, Juanjuan Xu\footnote{\small Corresponding author. Email:
 juanjuanxu@sdu.edu.cn.} , Huanshui Zhang
\thanks{This work is supported by the National Natural Science Foundation of China under Grants 61633014, 61873332, U1806204, U1701264, 61922051, the foundation for Innovative Research Groups of National Natural Science Foundation of China (61821004). T. Ma, J. Xu and H. Zhang are with the School of Control Science and Engineering,
Shandong University, Jinan, Shandong 250061, China (e-mail:
tianfu\_m@msn.com; hszhang@sdu.edu.cn).}%
}


\maketitle

\begin{abstract}
This paper is concerned with the decoupling of delayed linear forward-backward stochastic differential equations (D-FBSDEs), which is much more involved than the delay-free case due to the infinite dimension caused by the delay. A new approach of `discretization'  is proposed to obtain the explicit solution to the D-FBSDEs. Firstly, we transform the continuous-time D-FBSDEs into the discrete-time form by using discretization. Secondly, we derive the solution of the discrete-time D-FBSDEs by applying backward iterative induction. Finally the explicit solution of the continuous-time D-FBSDEs is obtained by taking the limit to the solution of discrete-time form. The proposed approach can be applied to solve more general FBSDEs with delay, which would provide a complete solution to the stochastic LQ control with time delay.

\vspace{0.3cm}

\noindent {\bf Keywords:}~ FBSDEs, Time-delay, Discretization.

\vspace{0.3cm}

\noindent {\bf AMS subject classifications:}~93E20, 60H10.
\end{abstract}

\section{Introduction}
Forward and backward stochastic differential equations (FBSDEs) are widely used in modern society \cite{Fleming,Peng2010}. The related applications can be found in engineering, applied mathematics, finance\cite{Nualart,Allen1}, also they play an important role in stochastic control and differential games\cite{Basar,Yong}.

The mostly motivation arises from the optimal control problems, to be specific, the control problem is reduced to the solvability of Hamiltonian systems which is a special kind of FBSDEs \cite{Wonham1968}. The early work was initiated in \cite{Bismut1978}, through the stochastic Hamiltonian system and the maximum principle. By the stochastic maximum principle, it can be seen that the optimal solution for the control problems depends on solving the corresponding FBSDEs, so the study of solving the FBSDEs is worthful whether in theoretical or practical significance.

In the literature, there are plenty of research focus on the solvability of the FBSDEs \cite{PSG1990,Antonelli,Mao,Ma1994,PSG1995,YJM}. In \cite{PSG1990}, the existence and uniqueness of the adapted solution were shown under the condition of the uniformly Lipschitz continuous. \cite{Antonelli} gave the existence and uniqueness of the solution of a FBSDE, the solution was given under a hypothesis on the Lipschitz constant. \cite{Mao} established the existence and uniqueness of the solution under the condition of non-Lipschitz coefficients. \cite{Ma1994} provided the explicit relationship among the forward and backward variables via the quasilinear partial differential equation by the four-step scheme.  \cite{PSG1995} studied the FBSDEs without the non-degeneracy condition for the forward equation, the existence and uniqueness of the solution was proved. \cite{YJM} made a further study in this direction, and present the method of continuation. After that, the solution of the FBSDEs has been intensively studied, more results can be found in \cite{Ma1999,Peng2000,Wu2005,Wu2013,Tang2013} and the references therein. Moreover, some numerical schemes were proposed in \cite{WYT,WYT2}.

However if the FBSDEs contain time delay, the solvability becomes much more complex. And to our best knowledge, there are few papers studying forward-backward stochastic differential equations with time delay. \cite{Oksendal} studied an infinite horizon system governed by the FBSDEs with delay, sufficient and necessary maximum principles for the optimal control are derived. \cite{Wu2009} considered stochastic optimal control problem of the stochastic delayed system, which can be described by the delayed FBSDEs, through duality method and backward stochastic differential equation (BSDE), the maximum principle was obtained. Due to the process of solving the FBDSEs is more abstract and non intuitive, the addressed problem was the existence and uniqueness of the solution, the explicit solution was not given. Recently, \cite{Juan} considered the FBSDEs with time delay, the explicit solution is given by the modified Riccati equation and two numerical schemes were also given. \cite{Juan1} gave the necessary and sufficient conditions for the existence of a solution to FBSDEs by a Riccati equation. \cite{Hzhang} concerned the problems of linear quadratic regulation control and a class of stochastic system with multiplicative noise and delay, a non-homogeneous relationship was established between the state and the costate of this class of systems. Inspired by these works, we can transform the continuous-times FBSDEs into the discrete-time form and obtain the solution of the discrete-time form, then we can transform the solution of the discrete-time form to the continuous-time form.

In this paper, we consider the solution of the continuous-time linear D-FBSDEs. Firstly, using the method of discretization, we transform the continuous-time D-FBSDEs into the discrete-time form, in this situation, the solution of this discrete-time D-FBSDEs was obtained. Secondly, by taking the limit, we reverse the solution of the discrete-time to the continuous-time, thus we obtain the explicit solution of the continuous-time D-FBSDEs.

This paper is organized into the five parts. Section II presents the problem formulation. The main results are described in the Section III. An application was given in Section IV, and the conclusions are provided into Section V. 

Notation: $R^n$ denotes the family of n-dimensional vectors, the subscript $'$ means the transpose, $\{\Omega,P,\mathcal{F},\{\mathcal{F}_t\}_{t\geq 0}\}$ denotes a complete stochastic basis so that $\mathcal{F}_0$ contains all P-null elements of $\mathcal{F}$. The filtration is generated by the standard one-dimension Brownian motion $\{w(t)\}$. $\hat{x}(t|s)\doteq E[x(t)|\mathcal{F}_s]$ denotes the conditional expectation with respect to the filtration $\mathcal{F}_s$. Then we introduce the following set:

\begin{equation*}
\begin{aligned}
\bar{C}_{[-h,0)}&=\Bigg\{\varphi(t):[-h,0)\to R^m \ \text{is continuous and} \\
&\qquad \sup_{-h\leq t<0}\|\varphi(t)\|<\infty\Bigg\},\\
L_{\mathcal{F}}^{2}(0,T;R^m)&=\Bigg\{\varphi(t)_{t\in[0,T]} \ \text{is an}\  \mathcal{F}_t-\text{adapted stochastic}\\ &\qquad \text{process}\  s.t. \ E\int_{0}^{T}\|\varphi(t)\|^2dt<\infty\Bigg\}.
\end{aligned}
\end{equation*}

\section{Problem Formulation}
In this section, we consider the following linear D-FBSDEs:
\begin{equation}\label{FBSDE}
\begin{aligned}
dx(t)&=\Big\{Ax(t)+BE[p(t)|\mathcal{F}_{t-h}]+CE[q(t)|\mathcal{F}_{t-h}]\Big\}dt\\
&\ \ +\Big\{\bar{A}x(t)+\bar{B}E[p(t)|\mathcal{F}_{t-h}]+\bar{C}E[q(t)|\mathcal{F}_{t-h}]\Big\}dw(t),\\
dp(t)&=-[Dp(t)+\bar{D}q(t)+Qx(t)]dt+q(t)dw(t),\\
x(0)&=x_0,\ p(T)=P(T)x(T),
\end{aligned}
\end{equation}
where $w(t)$ is the one-dimensional standard Brownian motion. $h>0$ is the time delay. $x_0\in R^n$ is initial values,  $A,B,C,D,\bar{A},\bar{B},\bar{C},\bar{D},Q,P(T)$ are constant matrices with compatible dimensions.

It is noticed that linear D-FBSDEs (\ref{FBSDE}) is of a general form and is fully coupled, and the solvability of this D-FBSDEs has wide applications in the stochastic control problems. Due to its strong background like physics, engineering and so on, it received considerable attention, and it has been thoroughly studied over recent decades. In the case of delay-free, i.e.,  $h=0$, the decouple solution of (\ref{FBSDE}) has been given as $p(t)=P(t)x(t)$ where $P(t)$ satisfies a generalized Riccati equation.  However in the delay case, the solution is much more complex.

In this paper, we will present a more intuitive method to solve this D-FBSDEs, and it can be generalized to the more common linear control problems with time delay or multiple input delays.

\section{The Method of Discretization}
In this section, we will give the solution for the continuous-time D-FBSDEs by using the method of discretization, and this will be divided into following steps: first, we will discretize the continuous-time D-FBSDEs (\ref{FBSDE}) to obtain its discrete-time form. Then we are going to obtain the solution of this discrete-time D-FBSDEs. At last, we will reverse the solution to the continuous-time form to obtain the solution of the continuous-time D-FBSDEs (\ref{FBSDE}).

At the beginning, we will make some definitions.  Giving a uniform partition: $0=t_0<t_1<\cdots<t_{N+1}=T$, letting $t_{k+1}=(k+1)\delta,\ t_k=k\delta,\ h=\delta d,\ \Delta w_k=w_{k+1}-w_k$ where $w_k$ is the one-dimensional standard Brownian motion. Then defining $\hat{A}=I+\delta A,\ \hat{B}=\delta B\ ,\hat{C}=\delta C,\ \hat{D}=I+\delta D,\ \hat{P}_{N+1}=P(T),\ \hat{Q}=\delta Q$. $\mathcal{F}_k$ is the natural filtration generated by $w_k$, i.e. $\mathcal{F}_{k}=\sigma\{\Delta w_0,\cdots,\Delta w_k\}$. In order to simplify the symbols, we denote the variables at time $t_k$ as $x_k$, $p_k$ and $q_k$. On the basis, we further define that
\begin{eqnarray*}
A_k&=&\hat{A}+\Delta w_k \bar{A},\\
B_k&=&\hat{B}+\Delta w_k \bar{B},\\
C_k&=&\hat{C}+\Delta w_k \bar{C},\\
D_k&=&\hat{D}+\Delta w_k \bar{D}.
\end{eqnarray*}

Then under these definitions, the discretized D-FBSDEs for (\ref{FBSDE}) will be given.

\begin{lemma}\label{lemma1}
The discretized D-FBSDEs for (\ref{FBSDE}) are given as follows:
\begin{eqnarray}
x_{k+1}&=&A_kx_k+B_kE(p_k|\mathcal{F}_{k-d-1})+\frac{1}{\delta}C_kE(\Delta w_kp_k|\mathcal{F}_{k-d-1}),\label{FSDE}\\
p_{k-1}&=&E(D_kp_k|\mathcal{F}_{k-1})+\hat{Q}x_k\label{BSDE},\\
p_N&=&\hat{P}_{N+1}x_{N+1}.\label{tcon}
\end{eqnarray}
\end{lemma}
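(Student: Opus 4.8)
The plan is to derive each of (\ref{FSDE})--(\ref{tcon}) by a one-step Euler--Maruyama discretization of the corresponding part of (\ref{FBSDE}); the only delicate ingredient is the discrete description of the martingale integrand $q$. I would first discretize the forward equation over the step $[t_k,t_{k+1}]$, which gives
\begin{equation*}
\begin{aligned}
x_{k+1}=x_k&+\big\{Ax_k+BE[p_k|\mathcal{F}_{t_k-h}]+CE[q_k|\mathcal{F}_{t_k-h}]\big\}\delta\\
&+\big\{\bar Ax_k+\bar BE[p_k|\mathcal{F}_{t_k-h}]+\bar CE[q_k|\mathcal{F}_{t_k-h}]\big\}\Delta w_k.
\end{aligned}
\end{equation*}
Since $h=d\delta$ we have $t_k-h=t_{k-d}$, and because $\Delta w_k=w_{k+1}-w_k$ the discrete filtration available at $t_{k-d}$ is $\mathcal{F}_{k-d-1}$; replacing $E[\cdot|\mathcal{F}_{t_k-h}]$ by $E[\cdot|\mathcal{F}_{k-d-1}]$ and grouping the deterministic and the $\Delta w_k$ parts into $A_k=\hat A+\Delta w_k\bar A$, $B_k=\hat B+\Delta w_k\bar B$ and $C_k=\hat C+\Delta w_k\bar C$ turns this into $x_{k+1}=A_kx_k+B_kE[p_k|\mathcal{F}_{k-d-1}]+C_kE[q_k|\mathcal{F}_{k-d-1}]$, which is (\ref{FSDE}) up to the remaining $q_k$ in the last term.

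The crux is to re-express $q_k$ through $p_k$, the discrete shadow of the statement that $q$ is the Brownian integrand of $p$. Writing the martingale decomposition $p_k=E[p_k|\mathcal{F}_{k-1}]+q_k\Delta w_k$ (to leading order, with the predictable integrand $q_k$ being $\mathcal{F}_{k-1}$-measurable), multiplying by $\Delta w_k$ and taking $E[\cdot|\mathcal{F}_{k-1}]$, and using $E[\Delta w_k|\mathcal{F}_{k-1}]=0$ and $E[(\Delta w_k)^2|\mathcal{F}_{k-1}]=\delta$, I obtain the key identity
\begin{equation*}
E[q_k|\mathcal{F}_{k-1}]=q_k=\tfrac1\delta E[\Delta w_kp_k|\mathcal{F}_{k-1}].
\end{equation*}
By the tower property, conditioning this further down to $\mathcal{F}_{k-d-1}\subseteq\mathcal{F}_{k-1}$ yields $E[q_k|\mathcal{F}_{k-d-1}]=\tfrac1\delta E[\Delta w_kp_k|\mathcal{F}_{k-d-1}]$. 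Substituting into $C_kE[q_k|\mathcal{F}_{k-d-1}]$ produces $\tfrac1\delta C_kE[\Delta w_kp_k|\mathcal{F}_{k-d-1}]$ and hence exactly (\ref{FSDE}).

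For the backward equation I would discretize the BSDE over one step as $p_{k-1}=\hat Dp_k+\delta\bar Dq_k+\hat Qx_k-q_k\Delta w_k$, using $I+\delta D=\hat D$ and $\delta Q=\hat Q$. Taking $E[\cdot|\mathcal{F}_{k-1}]$ annihilates the $q_k\Delta w_k$ term and leaves $p_{k-1}=E[\hat Dp_k|\mathcal{F}_{k-1}]+\delta\bar Dq_k+\hat Qx_k$ (here $x_k\in\mathcal{F}_{k-1}$). Replacing $\delta\bar Dq_k$ by $\bar DE[\Delta w_kp_k|\mathcal{F}_{k-1}]$ via the key identity and recombining with the $\hat D$ term gives $E[(\hat D+\Delta w_k\bar D)p_k|\mathcal{F}_{k-1}]+\hat Qx_k=E[D_kp_k|\mathcal{F}_{k-1}]+\hat Qx_k$, which is (\ref{BSDE}). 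The terminal condition (\ref{tcon}) is the direct discretization of $p(T)=P(T)x(T)$ with $\hat P_{N+1}=P(T)$ and $x_{N+1}$ the value at $t_{N+1}=T$.

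The main obstacle is the careful bookkeeping of adaptedness and of the index offsets. One must verify, by backward induction from the terminal condition, that $x_j$ is $\mathcal{F}_{j-1}$-measurable while $p_j$ is $\mathcal{F}_j$-measurable, and explain why the delayed conditioning lands on $\mathcal{F}_{k-d-1}$ rather than $\mathcal{F}_{k-d}$ --- both consequences of the convention $\Delta w_k=w_{k+1}-w_k$, under which $\mathcal{F}_k$ already contains the increment over $[t_k,t_{k+1}]$. The genuinely substantive point, however, is the $q$--$p$ identity: making precise the sense in which the predictable integrand $q_k$ equals $\tfrac1\delta E[\Delta w_kp_k|\mathcal{F}_{k-1}]$ is the discrete counterpart of the martingale representation theorem that underlies the well-posedness of the BSDE, and controlling the $O(\delta)$ errors dropped in this step is where the real care is needed.
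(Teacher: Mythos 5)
Your proposal is correct and follows essentially the same route as the paper's proof: Euler--Maruyama discretization of both equations combined with the key identity $q_k=\frac{1}{\delta}E[\Delta w_k p_k|\mathcal{F}_{k-1}]$, obtained by multiplying by $\Delta w_k$, conditioning on $\mathcal{F}_{k-1}$, using $E[(\Delta w_k)^2|\mathcal{F}_{k-1}]=\delta$, and discarding the higher-order terms in $\delta$ (your ``martingale decomposition to leading order'' is exactly the paper's step of dropping the $O(\delta^2)$ drift contribution). The only differences are cosmetic: you treat the forward equation first and make the tower-property step and the index bookkeeping explicit, whereas the paper derives the identity from the backward equation first and uses these facts implicitly.
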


\begin{proof}
First, we will illustrate how to discretize the BSDE. Like in \cite{Kailath}, we can rewrite BSDE as
\begin{equation}\label{p_k}
\begin{aligned}
p_k-p_{k-1}&=-\delta[Dp_k+\bar{D}q_k+Qx_k]+q_k\Delta w_k,
\end{aligned}
\end{equation}
By multiplying $\Delta w_k$ on both sides, we can obtain that
\begin{equation*}
\begin{aligned}
{\Delta w_k p_k-\Delta w_kp_{k-1}}&=-\delta\Delta w_k [Dp_k+\bar{D}q_k+Qx_k]+\delta q_k,
\end{aligned}
\end{equation*}
and then taking the conditional expectation with respect to $\mathcal{F}_{k-1}$ on both sides, we will obtain that
\begin{equation*}
\begin{aligned}
E({\Delta w_k p_k-\Delta w_k p_{k-1}}|\mathcal{F}_{k-1})=-E[\delta\Delta w_k(Dp_k+\bar{D}q_k+Qx_k)|\mathcal{F}_{k-1}]+\delta q_k.
\end{aligned}
\end{equation*}
that is,
\begin{equation*}
\begin{aligned}
E[\Delta w_k p_k|\mathcal{F}_{k-1}]=-E[\delta\Delta w_k(Dp_k+\bar{D}q_k+Qx_k)|\mathcal{F}_{k-1}]+\delta q_k.
\end{aligned}
\end{equation*}
Noting that the second term in the above equation is higher order of $\delta$ due to the fact that $E[\Delta w_k^2]=\delta$, it is derived by omitting this higher order term that
$q_k=\displaystyle \frac{1}{\delta}E(\Delta w_kp_k|\mathcal{F}_{k-1})$.

Then inserting $q_k=\displaystyle \frac{1}{\delta}E(\Delta w_k p_k|\mathcal{F}_{k-1})$ in (\ref{p_k}), and taking the conditional expectation with respect to $\mathcal{F}_{k-1}$ on both sides, we have the discretization of the BSDE given by
\begin{equation*}
\begin{aligned}
p_{k-1}=E[(I+\delta D+\Delta w_k \bar D)p_k|\mathcal{F}_{k-1}]+\delta Q x_k,
\end{aligned}
\end{equation*}
this is exactly (\ref{BSDE}). So we finish the discretization of BSDE.

Next we will discretize the forward stochastic differential equation (FSDE). Following the same procedure, we rewrite FSDE in (\ref{FBSDE}) as follows:
\begin{align*}
x_{k+1}-x_k&=\delta\Big[Ax_k+BE(p_k|\mathcal{F}_{k-d-1})+CE(q_k|\mathcal{F}_{k-d-1})\Big]\\
&+\Big[\bar{A}x_k+\bar{B}E(p_k|\mathcal{F}_{k-d-1})+\bar{C}E(q_k|\mathcal{F}_{k-d-1})\Big]\Delta w_k.
\end{align*}
then the above equation can be rewritten as follows:
\begin{equation}\label{xk+1}
\begin{aligned}
x_{k+1}&=(I+\delta A+\Delta w_k \bar{A})x_{k}+(\delta B+\Delta w_{k} \bar{B})E(p_{k}|\mathcal{F}_{k-d-1})\\
&\ +(\delta C+\Delta w_{k} \bar{C})E(q_{k}|\mathcal{F}_{k-d-1}).
\end{aligned}
\end{equation}

Inserting $q_k=\displaystyle \frac{1}{\delta}E(\Delta w_kp_k|\mathcal{F}_{k-1})$ into (\ref{xk+1}), we can rewrite the last term of the (\ref{xk+1}) as
\begin{eqnarray*}
(\delta C+\Delta w_k \bar{C})E(q_k|\mathcal{F}_{k-d-1})=\frac{1}{\delta}(\delta C+ \Delta w_k \bar{C})E(\Delta w_k p_k|\mathcal{F}_{k-d-1}).
\end{eqnarray*}

We thus obtain the discretized FSDEs as:
\begin{eqnarray*}
\begin{aligned}
x_{k+1}&=(I+\delta A+\Delta w_k \bar{A})x_k+(\delta B+\Delta w_k \bar{B})E(p_k|\mathcal{F}_{k-d-1})\\
&\ +\frac{1}{\delta}(\delta C+ \Delta w_k \bar{C})E(\Delta w_k p_k|\mathcal{F}_{k-d-1})\\
&= A_kx_k+B_kE(p_k|\mathcal{F}_{k-d-1})+\frac{1}{\delta}C_kE(\Delta w_kp_k|\mathcal{F}_{k-d-1}).
\end{aligned}
\end{eqnarray*}

This is exactly (\ref{FSDE}). So we finish the discretization of FSDE in (\ref{FBSDE}). This completes the proof.
\end{proof}

The second step is to solve the discrete-time D-FBSDEs (\ref{FSDE})-(\ref{tcon}), so the solution for the discrete-time D-FBSDEs (\ref{FSDE})-(\ref{tcon}) will be shown.

\begin{lemma}\label{lemma2}
D-FBSDEs (\ref{FSDE})-(\ref{tcon}) are uniquely solvable if the Riccati equations (\ref{Pk})-(\ref{Gammak}) admits a solution such that the matrix $\Gamma_k$ is invertible, and the solution for the discrete-time D-FBSDEs are given as follows for $k\geq d$:
\begin{eqnarray}
x_k&=&A_{k-1}x_{k-1}+M_{k-1}E(x_{k-1}|\mathcal{F}_{k-d-2}),\label{DFSDE}\\
p_{k-1}&=&\hat{P}_kx_k-\sum_{i=0}^{d}\hat{P}_{k}^{k+i}E(x_k|\mathcal{F}_{k-d+i-1}),\label{DBSDE}\\
p_{N}&=&\hat{P}_{N+1}x_{N+1},
\end{eqnarray}
where $\hat{P}_k,\hat{P}_{k}^{k+i}$ satisfies:
\begin{align}
\hat{P}_k&=\hat{D}\hat{P}_{k+1}\hat{A}+\delta\bar{D}\hat{P}_{k+1}\bar{A}-\hat{D}\hat{P}_{k+1}^{k+d+1}\hat{A}+\hat{Q},\label{Pk}\\
\hat{P}_{k}^{k+i}&=\hat{D}\hat{P}_{k+1}^{k+i}\hat{A},\\
\hat{P}_{k}^{k}&=-E[(D_{k}\hat{P}_{k+1}-\sum_{i=0}^{d}\hat{D}\hat{P}_{k+1}^{k+1+i})M_{k}],\label{Pkk}\\
M_{k-1}&=\left[
\begin{array}{cc}
\displaystyle B_{k-1}(\hat{P}_k-\sum_{i=0}^{d}\hat{P}_{k}^{k+i}) & \displaystyle \frac{1}{\delta}C_{k-1}\hat{P}_{k} \\
\end{array}
\right]
\Gamma_{k}^{-1}
\left[
   \begin{array}{c}
     \hat{A} \\
     \delta\bar{A} \\
   \end{array}
 \right],\label{Mk}\\
\Gamma_{k}&=
 \left[
  \begin{array}{cc}
    \displaystyle I-\hat{B}(\hat{P}_k-\sum_{i=0}^{d}\hat{P}_{k}^{k+i}) & \displaystyle-\frac{1}{\delta}\hat{C}\hat{P}_{k} \\
    \displaystyle -\delta\bar{B}(\hat{P}_k-\sum_{i=0}^{d}\hat{P}_{k}^{k+i}) & I-\bar{C}\hat{P}_{k} \\
  \end{array}
\right]\label{Gammak},
\end{align}
with the terminal value given by $\hat{P}_{N+1}$. $\hat{P}_{N+1+i}=0,\ \hat{P}_{N+1}^{j}=0$ for any $i,j>0$. In addition, $\hat{P}_{k}^{s}=0$ if $s>N$.
\end{lemma}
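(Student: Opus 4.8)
The plan is to prove both the forward recursion \eqref{DFSDE} and the state--costate relation \eqref{DBSDE} together by backward induction on the time layer $k$, descending from the terminal layer. The induction hypothesis at layer $k+1$ is \eqref{DBSDE} shifted by one index, i.e. $p_k=\hat P_{k+1}x_{k+1}-\sum_{i=0}^{d}\hat P_{k+1}^{k+1+i}E(x_{k+1}\mid\mathcal F_{k-d+i})$. The base case is the terminal constraint \eqref{tcon}, $p_N=\hat P_{N+1}x_{N+1}$, which is exactly \eqref{DBSDE} once the stated conventions $\hat P_{N+1}^{\,j}=0$ for $j>0$ and $\hat P_{k}^{\,s}=0$ for $s>N$ are used to annihilate the memory coefficients at the top layer. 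Throughout I will use three facts about the increments: $\Delta w_k$ has zero mean and is independent of $\mathcal F_{k-1}$, $E[\Delta w_k^{2}]=\delta$, and, by the tower property together with the delay ordering $k-d+i\ge k-d-1$, every nested conditional expectation that appears collapses onto the coarsest field present.

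The first half of the inductive step closes the forward dynamics. Substituting the hypothesis for $p_k$ into \eqref{FSDE}, the only costate functionals that occur are $E(p_k\mid\mathcal F_{k-d-1})$ and $E(\Delta w_kp_k\mid\mathcal F_{k-d-1})$. The tower collapse turns the first into $(\hat P_{k+1}-\sum_{i=0}^{d}\hat P_{k+1}^{k+1+i})E(x_{k+1}\mid\mathcal F_{k-d-1})$, which explains the recurring block $\hat P_{k+1}-\sum_{i}\hat P_{k+1}^{k+1+i}$, while the zero-mean and variance-$\delta$ identities reduce the second to a $\hat P_{k+1}$-weighted multiple of $E(\Delta w_kx_{k+1}\mid\mathcal F_{k-d-1})$. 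Because $x_{k+1}$ itself reappears through these same two averages, \eqref{FSDE} becomes a closed linear system in the pair $\bigl(E(x_{k+1}\mid\mathcal F_{k-d-1}),\,E(\Delta w_kx_{k+1}\mid\mathcal F_{k-d-1})\bigr)$ with driving term $[\hat A;\,\delta\bar A]\,E(x_k\mid\mathcal F_{k-d-1})$; its $2\times 2$ coefficient matrix is precisely $\Gamma_{k+1}$ of \eqref{Gammak}. Under the assumed invertibility of $\Gamma_{k+1}$ one solves for the two averages and re-substitutes, obtaining $x_{k+1}=A_kx_k+M_kE(x_k\mid\mathcal F_{k-d-1})$ with $M_k$ as in \eqref{Mk}; this is \eqref{DFSDE}.

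The second half propagates the relation one layer down. Feeding the hypothesis for $p_k$ and the forward recursion just obtained into the discretized backward equation \eqref{BSDE}, $p_{k-1}=E(D_kp_k\mid\mathcal F_{k-1})+\hat Qx_k$, and splitting $D_k=\hat D+\Delta w_k\bar D$, I would again evaluate the conditional expectations by the tower property and the increment moments. The result is an affine functional of $x_k$ in which the coefficient of $x_k$ collects $\hat Q$ together with the $\hat D(\cdot)\hat A$ and $\delta\bar D(\cdot)\bar A$ contributions produced by the diffusion parts of the forward step, giving the Riccati recursion \eqref{Pk} for $\hat P_k$; the coefficients of the shifted averages $E(x_k\mid\mathcal F_{k-d+i-1})$ give the memory-gain propagation $\hat P_k^{k+i}=\hat D\hat P_{k+1}^{k+i}\hat A$; and the part of the forward step carried by $M_k$ supplies the closing coefficient \eqref{Pkk}. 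This reproduces \eqref{DBSDE} at layer $k$ and completes the induction. Uniqueness and solvability then follow at once: the backward pass determines all the gains $\hat P_k,\hat P_k^{k+i}$ uniquely from the terminal data, the relation \eqref{DBSDE} together with \eqref{DFSDE} determines every $x_k$, hence every $p_k$ and $q_k=\tfrac1\delta E(\Delta w_kp_k\mid\mathcal F_{k-1})$, uniquely from $x_0$, and conversely any solution of \eqref{FSDE}--\eqref{tcon} is forced through these same invertible reductions.

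I expect the technical heart, and the main obstacle, to be the evaluation of $E(\Delta w_kp_k\mid\mathcal F_{k-d-1})$ and the assembly of $\Gamma_k$. One must keep careful track of which products of $\Delta w_k$ with the diffusion coefficients $\bar A,\bar B,\bar C$ and with the memory terms survive after conditioning: contributions from lags $i<d$ are killed because the corresponding averages are $\mathcal F_{k-1}$-measurable and $\Delta w_k$ has zero conditional mean, whereas the extreme lag $i=d$, whose conditioning field reaches $\mathcal F_k$, requires separate attention since there the increment no longer integrates out. Verifying that exactly the claimed blocks of $\Gamma_k$ emerge, that the only surviving cross terms are the $O(\delta)$ ones retained through $E[\Delta w_k^{2}]=\delta$, and that the index of each memory gain advances by precisely one lag without generating terms beyond $i=d$, is the delicate bookkeeping on which the whole argument rests.
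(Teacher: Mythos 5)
Your proposal is correct and takes essentially the same approach as the paper: a backward induction anchored at the terminal condition, in which the conditional expectations $E(p_k|\mathcal{F}_{k-d-1})$ and $E(\Delta w_k p_k|\mathcal{F}_{k-d-1})$ are collapsed by the tower property and the increment moments, the pair $\bigl(E(x_{k+1}|\mathcal{F}_{k-d-1}),\,E(\Delta w_k x_{k+1}|\mathcal{F}_{k-d-1})\bigr)$ is solved from the $2\times 2$ block system with coefficient matrix $\Gamma_{k+1}$ and substituted back to close the forward recursion (\ref{DFSDE}), after which insertion into (\ref{BSDE}) propagates (\ref{DBSDE}) and the Riccati recursions one layer down. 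The differences are cosmetic: you fold the $k=N$ base case into the general step via the conventions $\hat{P}_{N+1}^{j}=0$ where the paper works that layer out explicitly, and the extreme-lag $i=d$ term you rightly flag as the delicate point is exactly the one the paper absorbs into the coefficient of $x_k$ as the term $-\hat{D}\hat{P}_{k+1}^{k+d+1}\hat{A}$ in (\ref{Pk}).
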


\begin{proof}
We complete the proof by using the induction method. We will start the proof at $k=N$ first. For $k=N$, using the terminal condition (\ref{tcon}) and the FSDE (\ref{FSDE}), it yields that
\begin{equation}\label{terminal}
\begin{aligned}
x_{N+1}&=A_Nx_N+B_NE(p_N|\mathcal{F}_{N-d-1})+\frac{1}{\delta}C_NE(\Delta w_Np_N|\mathcal{F}_{N-d-1})\\
&=A_Nx_N+B_NE(\hat{P}_{N+1}x_{N+1}|\mathcal{F}_{N-d-1})\\
&\ +\frac{1}{\delta}C_NE(\Delta w_N\hat{P}_{N+1}x_{N+1}|\mathcal{F}_{N-d-1}).
\end{aligned}
\end{equation}

Taking the conditional expectation with respect to $\mathcal{F}_{N-d-1}$ on both sides of (\ref{terminal}), we have
\begin{equation*}
\begin{aligned}
E(x_{N+1}|\mathcal{F}_{N-d-1})&=\hat{A}E(x_N|\mathcal{F}_{N-d-1})+\hat{B}\hat{P}_{N+1}E(x_{N+1}|\mathcal{F}_{N-d-1})\\
&\ +\frac{1}{\delta}\hat{C}\hat{P}_{N+1}E(\Delta w_N x_{N+1}|\mathcal{F}_{N-d-1}),
\end{aligned}
\end{equation*}
which generates
\begin{equation}\label{1}
\begin{aligned}
&(I-\hat{B}\hat{P}_{N+1})E(x_{N+1}|\mathcal{F}_{N-d-1})\\
&=\hat{A}E(x_N|\mathcal{F}_{N-d-1})+\frac{1}{\delta}\hat{C}\hat{P}_{N+1}E(\Delta w_N x_{N+1}|\mathcal{F}_{N-d-1}).
\end{aligned}
\end{equation}
Then by multiplying $\Delta w_N$ on both sides of (\ref{terminal}) and taking the conditional expectation with respect to $\mathcal{F}_{N-d-1}$, we can obtain the following relationship:
\begin{equation*}
\begin{aligned}
&E(\Delta w_Nx_{N+1}|\mathcal{F}_{N-d-1})\\
&=E(\Delta w_NA_Nx_N|\mathcal{F}_{N-d-1})+E\big[\Delta w_NB_NE(\hat{P}_{N+1}x_{N+1}|\mathcal{F}_{N-d-1})|\mathcal{F}_{N-d-1}\big]\\
&\ +\frac{1}{\delta}E\big[\Delta w_NC_NE(\Delta w_N\hat{P}_{N+1}x_{N+1}|\mathcal{F}_{N-d-1})|\mathcal{F}_{N-d-1}\big],\\
\end{aligned}
\end{equation*}
we thus have
\begin{equation*}
\begin{aligned}
&E(\Delta w_Nx_{N+1}|\mathcal{F}_{N-d-1})\\
&=\delta \bar{A}E(x_N|\mathcal{F}_{N-d-1})+\delta\bar{B}\hat{P}_{N+1}E(x_{N+1}|\mathcal{F}_{N-d-1})\\
&\ +\bar{C}\hat{P}_{N+1}E(\Delta w_N x_{N+1}|\mathcal{F}_{N-d-1}),
\end{aligned}
\end{equation*}
which generates
\begin{equation}\label{2}
\begin{aligned}
&(I-\bar{C}\hat{P}_{N+1})E(\Delta w_N x_{N+1}|\mathcal{F}_{N-d-1})\\
&=\delta\bar{A}E(x_N|\mathcal{F}_{N-d-1})+\delta\bar{B}\hat{P}_{N+1}E(x_{N+1}|\mathcal{F}_{N-d-1}).
\end{aligned}
\end{equation}
Then combining (\ref{1}) and (\ref{2}), we have
\begin{equation*}
\begin{aligned}
&\left[
  \begin{array}{cc}
    I-\hat{B}\hat{P}_{N+1} & \displaystyle-\frac{1}{\delta}\hat{C}\hat{P}_{N+1} \\
    -\delta\bar{B}\hat{P}_{N+1} & I-\bar{C}\hat{P}_{N+1} \\
  \end{array}
\right]
\left[
  \begin{array}{c}
    E(x_{N+1}|\mathcal{F}_{N-d-1}) \\
    E(\Delta w_Nx_{N+1}|\mathcal{F}_{N-d-1}) \\
  \end{array}
\right]
\\
&=\left[
   \begin{array}{c}
     \hat{A} \\
     \delta\bar{A} \\
   \end{array}
 \right]E(x_N|\mathcal{F}_{N-d-1}).
 \end{aligned}
\end{equation*}
This gives that
\begin{equation}\label{3}
\begin{aligned}
&\left[
  \begin{array}{c}
    E(x_{N+1}|\mathcal{F}_{N-d-1}) \\
    E(\Delta w_Nx_{N+1}|\mathcal{F}_{N-d-1}) \\
  \end{array}
\right]=\Gamma_{N+1}^{-1}
\left[
   \begin{array}{c}
     \hat{A} \\
     \delta\bar{A} \\
   \end{array}
 \right]E(x_N|\mathcal{F}_{N-d-1}),
 \end{aligned}
\end{equation}
where
\begin{equation*}
\begin{aligned}
\Gamma_{N+1}=
&\left[
  \begin{array}{cc}
    I-\hat{B}\hat{P}_{N+1} & \displaystyle-\frac{1}{\delta}\hat{C}\hat{P}_{N+1} \\
    -\delta\bar{B}\hat{P}_{N+1} & I-\bar{C}\hat{P}_{N+1} \\
  \end{array}
\right].
 \end{aligned}
\end{equation*}

Introducing (\ref{3}) into (\ref{terminal}), we have
\begin{equation}\label{x_N+1}
\begin{aligned}
x_{N+1}&=A_Nx_N
+\left[
\begin{array}{cc}
B_N\hat{P}_{N+1} & \displaystyle \frac{1}{\delta}C_N\hat{P}_{N+1} \\
\end{array}
\right]
\left[
  \begin{array}{c}
    E(x_{N+1}|\mathcal{F}_{N-d-1}) \\
    E(\Delta w_Nx_{N+1}|\mathcal{F}_{N-d-1}) \\
  \end{array}
\right]\\
&=A_Nx_N+\left[
\begin{array}{cc}
B_N\hat{P}_{N+1} & \displaystyle \frac{1}{\delta}C_N\hat{P}_{N+1} \\
\end{array}
\right]
\Gamma_{N+1}^{-1}
\left[
   \begin{array}{c}
     \hat{A} \\
     \delta\bar{A} \\
   \end{array}
 \right]E(x_N|\mathcal{F}_{N-d-1})\\
&= A_Nx_N+M_NE(x_N|\mathcal{F}_{N-d-1}),
\end{aligned}
\end{equation}
where
\begin{align*}
M_N=\left[
\begin{array}{cc}
B_N\hat{P}_{N+1} & \displaystyle \frac{1}{\delta}C_N\hat{P}_{N+1} \\
\end{array}
\right]
\Gamma_{N+1}^{-1}
\left[
   \begin{array}{c}
     \hat{A} \\
     \delta\bar{A} \\
   \end{array}
 \right].
\end{align*}

Therefore, we proved that (\ref{DFSDE}) is true for $k=N$, Next we will consider BSDE (\ref{DBSDE}) is true for $k=N$.

Combining the terminal condition (\ref{tcon}) and (\ref{x_N+1}) and inserting to (\ref{BSDE}), we have the following relationship:
\begin{equation*}
\begin{aligned}
p_{N-1}&=E(D_Np_N|\mathcal{F}_{N-1})+\hat{Q}x_N\\
&=E(D_N\hat{P}_{N+1}x_{N+1}|\mathcal{F}_{N-1})+\hat{Q}x_N\\
&=E\Big\{D_N\hat{P}_{N+1}\big[A_Nx_N+M_NE(x_N|\mathcal{F}_{N-d-1})\big]|\mathcal{F}_{N-1}\Big\}+\hat{Q}x_N\\
&=E(D_N\hat{P}_{N+1}A_Nx_N|\mathcal{F}_{N-1})+\hat{Q}x_N+E\Big[D_N\hat{P}_{N+1}M_NE(x_N|\mathcal{F}_{N-d-1})|\mathcal{F}_{N-1}\Big]\\
&=(\hat{D}\hat{P}_{N+1}\hat{A}+\delta\bar{D}\hat{P}_{N+1}\bar{A}+\hat{Q})x_N+E(D_N\hat{P}_{N+1}M_N)E(x_N|\mathcal{F}_{N-d-1})\\
&\doteq \hat{P}_Nx_N-\hat{P}_{N}^{N}E(x_N|\mathcal{F}_{N-d-1}),
\end{aligned}
\end{equation*}
where
\begin{equation*}
\begin{aligned}
\hat{P}_N&=\hat{D}\hat{P}_{N+1}\hat{A}+\delta\bar{D}\hat{P}_{N+1}\bar{A}+\hat{Q},\\
\hat{P}_{N}^{N}&=-E(D_N\hat{P}_{N+1}M_N).
\end{aligned}
\end{equation*}

Thus we have verified (\ref{DFSDE})-(\ref{DBSDE}) are true for $k=N$. In order to complete the induction proof, we take any $s$ with $d<s<N$, and assuming that $x_k$ and $p_{k-1}$ are as (\ref{DFSDE})-(\ref{DBSDE}) for all $k\geq s+1$, then we will show that these conditions are also hold for the situation of $k=s$.

From (\ref{BSDE}) at $s$, (\ref{DBSDE}) with $k=s+1$, and (\ref{DFSDE}) with $k=s+1$, one has
\begin{equation*}
\begin{aligned}
p_{s-1}&=E(D_sp_s|\mathcal{F}_{s-1})+\hat{Q}x_s\\
&=E\Big\{D_s\big[\hat{P}_{s+1}x_{s+1}-\sum_{i=0}^{d}\hat{P}_{s+1}^{s+i+1}E(x_{s+1}|\mathcal{F}_{s-d+i})\big]|\mathcal{F}_{s-1}\Big\}+\hat{Q}x_s.\\
\end{aligned}
\end{equation*}

Noticing that $\hat{P}_{k}^{s}=0$ for any $s>N$, then through the simple calculation, we can obtain that
\begin{equation*}
\begin{aligned}
p_{s-1}&=(\hat{D}\hat{P}_{s+1}\hat{A}+\delta\bar{D}\hat{P}_{s+1}\bar{A}-\hat{D}\hat{P}_{s+1}^{s+d+1}\hat{A}+\hat{Q})x_s+E\big[(D_{s}\hat{P}_{s+1}\\
&\ -\sum_{i=0}^{d}D\hat{P}_{s+1}^{s+i})M_{s}\big]E(x_s|\mathcal{F}_{s-d-1})-\sum_{i=0}^{d}\hat{D}\hat{P}_{s+1}^{s+i+1}\hat{A}E(x_s|\mathcal{F}_{s-d+i})\\
& =\hat{P}_sx_s-\sum_{i=0}^{d}\hat{P}_{s}^{s+i}E(x_s|\mathcal{F}_{s-d+i-1}),
\end{aligned}
\end{equation*}
where the coefficients are given via (\ref{Pk})-(\ref{Pkk}).

Similarly, from (\ref{FSDE}) at $s$, (\ref{DBSDE}) with $k=s+1$, we have
\begin{equation*}
\begin{aligned}
x_{s+1}&=A_sx_s+B_sE(p_s|\mathcal{F}_{s-d-1})+\frac{1}{\delta}C_sE(\Delta w_sp_s|\mathcal{F}_{s-d-1}),\\
&=A_sx_s+B_sE[\hat{P}_{s+1}x_{s+1}-\sum_{i=0}^{d}\hat{P}_{s+1}^{s+i+1}E(x_{s+1}|\mathcal{F}_{s-d+i})|\mathcal{F}_{s-d-1}]\\
&\ +\frac{1}{\delta}C_sE\Big\{\Delta w_s[\hat{P}_{s+1}x_{s+1}-\sum_{i=0}^{d}\hat{P}_{s+1}^{s+i+1}E(x_{s+1}|\mathcal{F}_{s-d+i})]|\mathcal{F}_{s-d-1}\Big\}\\
&=A_{s}x_{s}+B_{s}(\hat{P}_{s+1}-\sum_{i=0}^{d}\hat{P}_{s+1}^{s+i+1})E(x_{s+1}|\mathcal{F}_{s-d+i})\\
&\ +\frac{1}{\delta}C_{s}\hat{P}_{s+1}E(\Delta w_{s}x_{s+1}|\mathcal{F}_{s-d+i}).
\end{aligned}
\end{equation*}

Using the similar derivation to (\ref{terminal}), (\ref{1}) and (\ref{2}) at $k=N$, we can obtain that
\begin{equation*}
\begin{aligned}
&\left[
  \begin{array}{cc}
    \displaystyle I-\hat{B}(\hat{P}_{s+1}-\sum_{i=0}^{d}\hat{P}_{s+1}^{s+i+1}) & \displaystyle-\frac{1}{\delta}\hat{C}\hat{P}_{s+1} \\
    \displaystyle -\delta\bar{B}(\hat{P}_{s+1}-\sum_{i=0}^{d}\hat{P}_{s+1}^{s+i+1}) & I-\bar{C}\hat{P}_{s+1} \\
  \end{array}
\right]
\left[
  \begin{array}{c}
    E(x_{s+1}|\mathcal{F}_{s-d-1}) \\
    E(\Delta w_sx_{s+1}|\mathcal{F}_{s-d-1}) \\
  \end{array}
\right]
\\
&=\left[
   \begin{array}{c}
     A \\
     \delta\bar{A} \\
   \end{array}
 \right]E(x_s|\mathcal{F}_{s-d-1}),
 \end{aligned}
\end{equation*}
which gives that
\begin{equation*}
\begin{aligned}
x_{s+1}&=A_sx_s
+\left[
\begin{array}{cc}
\displaystyle B_s(\hat{P}_{s+1}-\sum_{i=0}^{d}\hat{P}_{s+1}^{s+i+1}) & \displaystyle \frac{1}{\delta}C_s\hat{P}_{s+1} \\
\end{array}
\right]\\
&\ \times
\left[
  \begin{array}{c}
    E(x_{s+1}|\mathcal{F}_{s-d-1}) \\
    E(\Delta w_sx_{s+1}|\mathcal{F}_{s-d-1}) \\
  \end{array}
\right]\\
&=A_sx_s+\left[
\begin{array}{cc}
\displaystyle B_s(\hat{P}_{s+1}-\sum_{i=0}^{d}\hat{P}_{s+1}^{s+i+1}) & \displaystyle \frac{1}{\delta}C_s\hat{P}_{s+1} \\
\end{array}
\right]\\
&\ \times
\Gamma_{s+1}^{-1}
\left[
   \begin{array}{c}
     A \\
     \delta\bar{A} \\
   \end{array}
 \right]E(x_s|\mathcal{F}_{s-d-1})\\
&= A_sx_s+M_sE(x_s|\mathcal{F}_{s-d-1}),
\end{aligned}
\end{equation*}
where the coefficients are given by (\ref{Mk})-(\ref{Gammak}). Hence, (\ref{DFSDE})-(\ref{Gammak}) are true for $s=k$. Then the proof is completed by the inductive approach.
\end{proof}

\begin{remark}
It is noted that the expectation in $\hat{P}_{k}^{k}$ can be calculated easily, that is
\begin{equation*}
\begin{aligned}
\hat{P}_{k}^{k}&=-E\big[(D_{k}\hat{P}_{k+1}-\sum_{i=0}^{d}D\hat{P}_{k+1}^{k+i})M_{k}\big]=
\displaystyle -(\alpha_1+\alpha_2)
\end{aligned}
\end{equation*}
where
\begin{align*}
\alpha_1&=(\hat{D}\hat{P}_{k+1}\hat{B}+\delta\bar{D}\hat{P}_{k+1}\bar{B}-\sum_{i=0}^{d}\hat{D}\hat{P}_{k+1}^{k+i+1}\hat{B})(\hat{P}_k-\sum_{i=0}^{d}\hat{P}_{k}^{k+i})\Gamma_{k+1}^{-1}\hat{A},\\
\alpha_2&=\frac{1}{\delta}(\hat{D}\hat{P}_{k+1}\hat{C}+\delta\bar{D}\hat{P}_{k+1}\bar{C}-\sum_{i=0}^{d}\hat{D}\hat{P}_{k+1}^{k+i+1}\hat{C})P_k\Gamma_{k+1}^{-1}\bar{A}.
\end{align*}
\end{remark}

Based on Lemma \ref{lemma2}, it is clearly to find that the solution to the D-FBSDEs depends on the solution to Riccati equations (\ref{Pk})-(\ref{Gammak}), so we can infer that the solution to the coninuous-time D-FBSDEs also relies on the corresponding
Riccati equations.

We are now in the position to give the solution of the continuous-time D-FBSDEs. Using the definition of $A_k,B_k,C_k$ and $D_k$, then letting $\delta\to 0$, we can reverse the equation (\ref{DFSDE})-(\ref{DBSDE}) to the continuous-time form, so the main result of this paper is present in the following theorem.
\begin{theorem}\label{theorem1}
{D-FBSDEs (\ref{FBSDE}) are uniquely solvable if the Riccati equations (\ref{P(t)})-(\ref{P(t,s)}) admits a solution such that the matrix $I-\bar{C}P(t)$ is invertible, and the solution of the continuous-time D-FBSDEs (\ref{FBSDE}) is given as follows for $t\geq h$:}
\begin{align}
p(t)&=P(t)x(t)-\int_{0}^{h}P(t,t+\theta)E[x(t)|\mathcal{F}_{t+\theta-h}]d\theta,\  h\leq t \leq T, \label{pt}\\
q(t)&=P(t)\Big\{\bar{A}x(t)+\bar{B}E[p(t)|\mathcal{F}_{t-h}]+\bar{C}E[q(t)|\mathcal{F}_{t-h}]\Big\},\label{qt}\\
\nonumber dx(t)&=\Big\{Ax(t)+\Big[BS(t)+CP(t)[I-\bar{C}P(t)]^{-1}[\bar{A}+\bar{B}S(t)]\Big]\\
 &\ \times E[x(t)|\mathcal{F}_{t-h}]\Big\}dt+\Big\{\bar{A}x(t)+\Big[\bar{B}S(t)+\bar{C}P(t)[I-\bar{C}P(t)]^{-1}\label{x(t)}\\
\nonumber &\ \times[\bar{A}+\bar{B}S(t)]\Big]E[x(t)|\mathcal{F}_{t-h}]\Big\}dw(t).
\end{align}
where $P(t),P(t,s)$ satifies
\begin{eqnarray}
-\dot{P}(t)&=&DP(t)+P(t)A+\bar{D} P(t)\bar{A}+Q-P(t,t+h), \label{P(t)}\\
\nonumber P(t,t)&=&-\Big[(S(t)B+\bar{D}P(t)\bar{B})S(t)+[I-\bar{C}P(t)]^{-1}(S(t)C\\
&&+\bar{D}P(t)\bar{C})P(t)[\bar{A}+\bar{B}S(t)]\Big],\label{P(t,t)}\\
P(t,s)&=&e^{D(s-t)}P(s,s)e^{A(s-t)},\label{P(t,s)}
\end{eqnarray}

with the terminal value $P(T)$. $P(T,T+\theta)=0$ for any $\theta\geq 0$, $\displaystyle S(t)=P(t)-\int_{0}^{h}P(t,t+\theta)d\theta$ and the related matrices are define as (\ref{FBSDE}).

\end{theorem}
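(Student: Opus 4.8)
The plan is to obtain Theorem \ref{theorem1} by passing to the limit $\delta\to0$ in the explicit discrete-time solution furnished by Lemma \ref{lemma2}, matching each discrete object with a continuous counterpart along the partition $t_k=k\delta$ with $h=d\delta$. The dictionary I would set up is $\hat P_k\to P(t)$; the off-diagonal coefficients scale like $\hat P_k^{k+i}\approx\delta\,P(t,t+\theta)$ with $\theta=i\delta$, so that the Riemann sum $\sum_{i=0}^{d}\hat P_k^{k+i}E(x_k|\mathcal F_{k-d+i-1})$ converges to $\int_0^h P(t,t+\theta)E[x(t)|\mathcal F_{t+\theta-h}]\,d\theta$ (note $(k-d+i-1)\delta\to t+\theta-h$); and hence $\hat P_k-\sum_{i=0}^{d}\hat P_k^{k+i}\to S(t)$. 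With this dictionary the backward representation (\ref{DBSDE}) converges termwise to (\ref{pt}), which is the first assertion.

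Next I would pass to the limit in the Riccati recursions. Dividing (\ref{Pk}) by $\delta$ and using $\hat A=I+\delta A$, $\hat D=I+\delta D$, $\hat Q=\delta Q$, the increment $(\hat P_k-\hat P_{k+1})/\delta$ tends to $DP(t)+P(t)A+\bar DP(t)\bar A+Q-P(t,t+h)$, which is exactly (\ref{P(t)}) (here $\hat P_{k+1}^{k+d+1}\approx\delta P(t,t+h)$, its offset being $d$). Reading the recursion $\hat P_k^{k+i}=\hat D\hat P_{k+1}^{k+i}\hat A$ at fixed absolute superscript $s\leftrightarrow(k+i)\delta$ gives, after the same rescaling, the transport equation $-\partial_t P(t,s)=DP(t,s)+P(t,s)A$ whose solution with datum $P(s,s)$ is $P(t,s)=e^{D(s-t)}P(s,s)e^{A(s-t)}$, i.e. (\ref{P(t,s)}). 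The boundary datum (\ref{P(t,t)}) is the limit of $\hat P_k^k/\delta$, computed from the closed form for $\hat P_k^k$ given in the preceding Remark.

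The forward dynamics (\ref{x(t)}) is obtained from (\ref{DFSDE}): writing $A_{k-1}x_{k-1}=(I+\delta A+\Delta w_{k-1}\bar A)x_{k-1}$ reproduces the $Ax(t)\,dt+\bar Ax(t)\,dw(t)$ part, while $M_{k-1}$ must be split into a drift contribution (its $O(\delta)$ part) and a diffusion contribution (its $O(\Delta w_{k-1})$ part, via $E[\Delta w_{k-1}^2]=\delta$); identifying coefficients of $E[x(t)|\mathcal F_{t-h}]$ yields the two bracketed matrices in (\ref{x(t)}). For $q(t)$ I would use the discrete identity $q_k=\tfrac1\delta E(\Delta w_k p_k|\mathcal F_{k-1})$: substituting $p_k$ from (\ref{DBSDE}), the conditionally measurable terms annihilate against $\Delta w_k$ in the limit and only $\hat P_{k+1}$ times the diffusion coefficient of $x_{k+1}$ survives, giving $q(t)=P(t)\{\bar Ax(t)+\bar BE[p(t)|\mathcal F_{t-h}]+\bar CE[q(t)|\mathcal F_{t-h}]\}$, i.e. (\ref{qt}).

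The hard part will be the forward step and the boundary $P(t,t)$, both of which involve the singular entries $\tfrac1\delta C_{k-1}\hat P_k$ and $\tfrac1\delta\hat C\hat P_k$ inside $\Gamma_k$. Because of this $1/\delta$, the leading-order form of $\Gamma_k^{-1}$ is insufficient: one must expand $\Gamma_k^{-1}\left[\begin{smallmatrix}\hat A\\ \delta\bar A\end{smallmatrix}\right]$ to first order in $\delta$, whereupon the off-diagonal entry $-\delta\bar B\big(\hat P_k-\sum_{i=0}^{d}\hat P_k^{k+i}\big)$ feeds an extra $\bar B S(t)$ into the lower block and produces the factor $[\bar A+\bar B S(t)]$ (rather than $\bar A$ alone) appearing in (\ref{x(t)}) and (\ref{P(t,t)}). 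Retaining exactly the $O(1)$ terms that survive multiplication by $1/\delta$ while discarding genuine higher-order terms is the delicate bookkeeping; the hypothesis that $I-\bar C P(t)$ be invertible is precisely what makes the limiting $\Gamma_{N+1}^{-1}$ (block upper-triangular with diagonal blocks $I$ and $I-\bar C P(t)$) exist, and it also carries over the uniqueness from Lemma \ref{lemma2}. A fully rigorous argument would additionally need to justify convergence of the discrete solutions to the continuous ones, which the formal limit leaves open.
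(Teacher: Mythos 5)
Your proposal is correct and follows essentially the same route as the paper: pass to the limit $\delta\to 0$ in the discrete solution of Lemma \ref{lemma2} using the same dictionary ($\hat{P}_k\to P(t)$, $\hat{P}_k^{k+i}\approx\delta P(t,t+\theta)$, Riemann sums to integrals, the recursions (\ref{Pk})--(\ref{Pkk}) to the Riccati equations (\ref{P(t)})--(\ref{P(t,s)})), including the explicit expansion of $\Gamma_k^{-1}$ that produces the factor $[\bar{A}+\bar{B}S(t)]$ and the discrete identity $q_k=\frac{1}{\delta}E(\Delta w_k p_k|\mathcal{F}_{k-1})$ for (\ref{qt}). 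The convergence gap you flag at the end is precisely what the paper patches by a separate It\^{o}-formula verification in the Appendices that the limiting formulas (\ref{pt})--(\ref{x(t)}) indeed satisfy (\ref{FBSDE}).
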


\begin{proof}
First, we will verify (\ref{pt}). Defining the limitation of $\hat P_k$ and $\hat{P}_{k}^{k+i}$ are as $P(t)$ and $P(t,t+\theta)$. and considering the discrete-time form (\ref{DBSDE}):
\begin{equation*}
\begin{aligned}
p_{k-1}&=&\hat{P}_kx_k-\sum_{i=0}^{d}\hat{P}_{k}^{k+i}E(x_k|\mathcal{F}_{k-d+i-1}),
\end{aligned}
\end{equation*}
by taking the limit on both sides we have
\begin{align*}
\displaystyle \nonumber p(t)&=P(t)x(t)-\int_{0}^{h}P(t,t+\theta)E[x(t)|\mathcal{F}_{t+\theta-h}]d\theta,
\end{align*}
which is (\ref{pt}). The proof of the corresponding Riccati equations will be given later.

Next we will verify (\ref{qt}). Using the definition $q_k=\displaystyle \frac{1}{\delta}E(\Delta w_kp_k|\mathcal{F}_{k-1})$ and (\ref{DBSDE}), we can obtain that
\begin{equation*}
\begin{aligned}
q_k&=\frac{1}{\delta}E(\Delta w_kp_k|\mathcal{F}_{k-1})\\
&=\frac{1}{\delta}E\Big\{\Delta w_k\big[\hat{P}_{k+1}x_{k+1}+\sum_{i=0}^{d-1}\hat{P}_{k+1}^{k+1+i}E(x_{k+1}|\mathcal{F}_{k-d+i})\big]|\mathcal{F}_{k-1}\Big\}.
\end{aligned}
\end{equation*}
Inserting (\ref{xk+1}), we have
\begin{equation*}
\begin{aligned}
q_k&=\frac{1}{\delta}E\Big\{\Delta w_k\big\{\hat{P}_{k+1}\big[(I+\delta A+\Delta w_k \bar{A})x_{k}+(\delta B+\Delta w_{k} \bar{B})E(p_{k}|\mathcal{F}_{k-d-1})\\
&\ +(\delta C+\Delta w_{k} \bar{C})E(q_{k}|\mathcal{F}_{k-d-1})\big]+\sum_{i=0}^{d-1}\hat{P}_{k+1}^{k+1+i}E(x_{k+1}|\mathcal{F}_{k-d+i})\big\}|\mathcal{F}_{k-1}\Big\}.
\end{aligned}
\end{equation*}
Through the simple calculation, we thus have
\begin{equation*}
\begin{aligned}
q_k=\hat{P}_{k+1}\Big[\bar{A}x_k+\bar{B}E(p_k|\mathcal{F}_{k-d-1})+\bar{C}E(q_k|\mathcal{F}_{k-d-1})\Big].
\end{aligned}
\end{equation*}
Then taking the limit on both sides, we can obtain the continuous-time form, which is (\ref{qt}).

Next we will show the process of proving (\ref{x(t)}). Inserting $\hat{B}=\delta B,\hat{C}=\delta C$ in $\Gamma_k$, then defining $\displaystyle S_k=\hat{P}_k-\sum_{i=0}^{d}\hat{P}_{k}^{k+i}$, we have
\begin{equation*}
\begin{aligned}
\Gamma_k&= \left[
  \begin{array}{cc}
    \displaystyle I-\hat{B}(\hat{P}_k-\sum_{i=0}^{d}\hat{P}_{k}^{k+i}) & \displaystyle-\frac{1}{\delta}\hat{C}\hat{P}_{k} \\
    \displaystyle -\delta\bar{B}(\hat{P}_k-\sum_{i=0}^{d}\hat{P}_{k}^{k+i}) & I-\bar{C}\hat{P}_{k} \\
  \end{array}
\right]
=
 \left[
  \begin{array}{cc}
    \displaystyle I-\delta BS_k & \displaystyle -C\hat{P}_{k} \\
    \displaystyle -\delta\bar{B}S_k & I-\bar{C}\hat{P}_{k} \\
  \end{array}
\right].
\end{aligned}
\end{equation*}

Through the calculation, we can obtain the reverse of $\Gamma_k$ as follows:
\begin{equation*}
\begin{aligned}
&\Gamma_k^{-1}= \left[
  \begin{array}{cc}
    \displaystyle I-\delta BS_k & \displaystyle -C\hat{P}_{k} \\
    \displaystyle -\delta\bar{B}S_k & I-\bar{C}\hat{P}_{k} \\
  \end{array}
\right]^{-1}
=
 \left[
  \begin{array}{cc}
     M_{1} & M_{2} \\
     M_{3} & M_{4} \\
  \end{array}
\right],
\end{aligned}
\end{equation*}
where
\begin{equation*}
\begin{aligned}
M_1&=\Phi,\\
M_2&=\Phi C\hat{P}_k(I-\bar{C}\hat{P}_k)^{-1},\\
M_3&=\delta(I-\bar{C}\hat{P}_k)^{-1}\bar{B}S_k\Phi,\\
M_4&=(I-\bar{C}\hat{P}_k)^{-1}+\delta(I-\bar{C}\hat{P}_k)^{-1}\bar{B}S_k\Phi CP_{k}(I-\bar{C}\hat{P}_k)^{-1},\\
\Phi&=[I-\delta BS_k-\delta C\hat{P}_k(I-\bar{C}\hat{P}_k)^{-1}\bar{B}S_k]^{-1}.
\end{aligned}
\end{equation*}

Next, by replacing $\hat{A}=I+\delta A,\hat{D}=I+\delta D,\hat{B}=\delta B,\hat{C}=\delta C$ in (\ref{DFSDE}), we can obtain that (\ref{DFSDE}) has the following form:
\begin{equation*}
\begin{aligned}
x_k=&(I+\delta A+\Delta w_{k-1}\bar{A})x_{k-1} \\
&+\left[
\begin{array}{cc}
\displaystyle B_{k-1}S_k & \displaystyle \frac{1}{\delta}C_{k-1}\hat{P}_{k} \\
\end{array}
\right]
\Gamma_{k}^{-1}
\left[
   \begin{array}{c}
     I+\delta A \\
     \delta\bar{A} \\
   \end{array}
 \right]
E(x_k|\mathcal{F}_{k-d-2})\\
=&(I+\delta A+\Delta w_{k-1}\bar{A})x_{k-1}+\big[(B_{k-1}S_kM_1+\frac{1}{\delta}C_{k-1}\hat{P}_{k}M_3)(I+\delta A) \\
&+(B_{k-1}S_kM_2+\frac{1}{\delta}C_{k-1}\hat{P}_{k}M_4)\delta\bar{A} \big] E(x_k|\mathcal{F}_{k-d-2}),
\end{aligned}
\end{equation*}
which gives that
\begin{equation*}
\begin{aligned}
x_k&=(I+\delta A+\Delta w_{k-1}\bar{A})x_{k-1}+\Big\{[B_{k-1}S_k\Phi+C_{k-1}\hat{P}_{k}(I-\bar{C}\hat{P}_k)^{-1}\bar{B}S_k\Phi](I+\delta A)\\
&+\delta B_{k-1}S_k\Phi C\hat{P}_k(I-\bar{C}P_k)^{-1}\bar{A}+C_{k-1}\hat{P}_{k}(I-\bar{C}\hat{P}_k)^{-1}\bar{A}+\delta C_{k-1}\hat{P}_{k}(I-\bar{C}P_k)^{-1}\\
&\times\bar{B}S_k\Phi C\hat{P}_{k}(I-\bar{C}\hat{P}_k)^{-1}\bar{A}\Big\}E(x_{k-1}|\mathcal{F}_{k-d-2}).
\end{aligned}
\end{equation*}
Then combining $B_{k}=\delta B+\Delta w_{k-1}\bar{B}$ and $C_{k}=\delta C+\Delta w_{k-1}\bar{C}$, we thus have
\begin{equation*}
\begin{aligned}
x_k&=(I+\delta A+\Delta w_{k-1}\bar{A})x_{k-1}+\delta\Big\{[ BS_k\Phi+ C\hat{P}_{k}(I-\bar{C}\hat{P}_k)^{-1}\bar{B}S_k\Phi](I+\delta A)\\
&\ + \delta BS_k\Phi C\hat{P}_k(I-\bar{C}\hat{P}_k)^{-1}\bar{A}+\ C\hat{P}_{k}(I-\bar{C}\hat{P}_k)^{-1}\bar{A}+ \delta C\hat{P}_{k}(I-\bar{C}\hat{P}_k)^{-1}\\
&\ \times\bar{B}S_k\Phi C\hat{P}_{k}(I-\bar{C}\hat{P}_k)^{-1}\bar{A}\Big\}E(x_{k-1}|\mathcal{F}_{k-d-2})\\
&\ +\Big\{[\Delta w_{k-1}\bar{B}S_k\Phi+\Delta w_{k-1}\bar{C}\hat{P}_{k}(I-\bar{C}\hat{P}_k)^{-1}\bar{B}S_k\Phi](I+\delta A)\\
&\ +\delta \Delta w_{k-1}\bar{B}S_k\Phi C\hat{P}_k(I-\bar{C}\hat{P}_k)^{-1}\bar{A}+\Delta w_{k-1}\bar{C}\hat{P}_{k}(I-\bar{C}\hat{P}_k)^{-1}\bar{A}\\
&\ +\delta \Delta w_{k-1}\bar{C}\hat{P}_{k}(I-\bar{C}\hat{P}_k)^{-1}\bar{B}S_k\Phi C\hat{P}_{k}(I-\bar{C}\hat{P}_k)^{-1}\bar{A}\Big\}E(x_{k-1}|\mathcal{F}_{k-d-2}).
\end{aligned}
\end{equation*}

Noticing the definition we given at the beginning, i.e. $\delta=t_{k+1}-t_k$, thus we keep the $\delta$ which outside the braces remain. Then let other $\delta\to 0$, we can obtain the continuous-time form as follows:
\begin{equation*}
\begin{aligned}
dx(t)=&\Big\{Ax(t)+\Big[BS(t)+CP(t)[I-\bar{C}P(t)]^{-1}[\bar{A}+\bar{B}S(t)]\Big]E[x(t)|\mathcal{F}_{t-h}]\Big\}dt\\
&+\Big\{\bar{A}x(t)+\Big[\bar{B}S(t)+\bar{C}P(t)[I-\bar{C}P(t)]^{-1}[\bar{A}+\bar{B}S(t)]\Big]E[x(t)|\mathcal{F}_{t-h}]\Big\}dw(t).
\end{aligned}
\end{equation*}
which is same as (\ref{x(t)}).

We are now in the position to give the proof of the Riccati equations (\ref{P(t)})-(\ref{P(t,s)}). Actually they can be obtained by taking limit to the discrete-time form (\ref{Pk})-(\ref{Pkk}).

Using the definition $\hat{A}=I+\delta A,\hat{D}=I+\delta D,\hat{Q}=\delta Q$ in $\hat{P}_k$, we can rewrite $\hat{P}_k$ as follows:
\begin{equation*}
\begin{aligned}
\hat{P}_k&=\hat{D}\hat{P}_{k+1}\hat{A}+\delta\bar{D}\hat{P}_{k+1}\bar{A}-\hat{D}\hat{P}_{k+1}^{k+d+1}\hat{A}+\hat{Q}\\
&=(I+\delta D)\hat{P}_{k+1}(I+\delta A)+\delta\bar{D}'\hat{P}_{k+1}\bar{A}-(I+\delta D)\hat{P}_{k+1}^{k+d+1}(I+\delta A)+\delta Q.\\
\end{aligned}
\end{equation*}
Through the simple calculation we have
\begin{equation*}\label{deltaP}
\begin{aligned}
\frac{\hat{P}_k-\hat{P}_{k+1}}{\delta}&=D\hat{P}_{k+1}+\hat{P}_{k+1}A+\delta D\hat{P}_{k+1}A+\bar{D}'\hat{P}_{k+1}\bar{A}-\frac{1}{\delta}\hat{P}_{k+1}^{k+d+1}\\
&\ -D\hat{P}_{k+1}^{k+d+1}-\hat{P}_{k+1}^{k+d+1}A-\delta D\hat{P}_{k+1}^{k+d+1}A+Q.
\end{aligned}
\end{equation*}
Noticing that $\hat{P}_{k+1}^{k+d+1}$ included a $\delta$, thus the above equation actually has the following form:
\begin{equation*}
\begin{aligned}
\frac{\hat{P}_k-\hat{P}_{k+1}}{\delta}&=D\hat{P}_{k+1}+\hat{P}_{k+1}A+\delta D\hat{P}_{k+1}A+\bar{D} \hat{P}_{k+1}\bar{A}-\hat{P}_{k+1}^{k+d+1}-\delta D\hat{P}_{k+1}^{k+d+1}\\
&\ -\delta \hat{P}_{k+1}^{k+d+1}A-\delta^2 D\hat{P}_{k+1}^{k+d+1}A+Q,
\end{aligned}
\end{equation*}
then letting $\delta\to 0$ we can obtain the continuous-time form as
\begin{equation*}
\begin{aligned}
-\dot{P}(t)&=DP(t)+P(t)A+\bar{D} P(t)\bar{A}+Q-P(t,t+h).
\end{aligned}
\end{equation*}
By the definition, when $\delta\to 0$, $\displaystyle\frac{1}{\delta}(\hat{P}_k-\hat{P}_{k+1})$ is defined as $-\dot{P}(t)$, and $\hat{P}_k$ becomes as $P(t)$, so the proof of convergence is no need to include here. Thus we get (\ref{P(t)}). Additionally, we verify that the derived solution indeed satisfies the D-FBSDEs by using the knowledge of the It\^{o} integral, the detailed derivations are given in Appendices. And using the same method, $P(t,t)$ can be obtained directly. Then we will illustrate how to get the continuous-time form of $\hat{P}_{k}^{k+i}$.

Similarly, by the definition of $\hat{A}$ and $\hat{D}$, we have
\begin{equation*}
\begin{aligned}
\hat{P}_{k}^{k+i}&=\hat{D}\hat{P}_{k+1}^{k+i}\hat{A}=(I+\delta D)\hat{P}_{k+1}^{k+i}(I+\delta A)\\
&=\hat{P}_{k+1}^{k+i}+\delta \hat{P}_{k+1}^{k+i}A+ \delta D\hat{P}_{k+1}^{k+i}+\delta^2 D\hat{P}_{k+1}^{k+i}A,
\end{aligned}
\end{equation*}
thus we can obtain the following relationship:
\begin{equation*}
\frac{\hat{P}_{k}^{k+i}-\hat{P}_{k+1}^{k+i}}{\delta}=\hat{P}_{k+1}^{k+i}A+ D\hat{P}_{k+1}^{k+i}+\delta D\hat{P}_{k+1}^{k+i}A.
\end{equation*}
Noticing that $\hat{P}_{k}^{k+i}$ included a $\delta$, then let $\delta\to 0$ we have
\begin{equation*}
\frac{\partial P(t,t+\theta)}{\partial\theta}=DP(t,t+\theta)A,
\end{equation*}
then solving this equation we have
\begin{equation*}
P(t,s)=e^{D(t-s)}C^*e^{A(t-s)},
\end{equation*}
let $t=s$ we can obtain that $C^*=P(s,s)$, thus we have
\begin{equation*}
P(t,s)=e^{D(s-t)}P(s,s)e^{A(s-t)}.
\end{equation*}
So we prove (\ref{P(t)})-(\ref{P(t,s)}).

Parallel to Lemma \ref{lemma2}, it is obtained that if Riccati equations (\ref{P(t)})-(\ref{P(t,s)}) have the solution such that the matrix $I-\bar C P(t)$ is invertible, then the unique solution of D-FBSDEs (\ref{FBSDE}) is given by (\ref{pt})-(\ref{qt}). The proof is now completed.
\end{proof}
\begin{remark}\label{remark5}
$E[q(t)|\mathcal{F}_{t-h}]$ can be calculated easily by taking expectation on both sides of (\ref{qt}), i.e.
\begin{equation*}
\begin{aligned}
E[q(t)|\mathcal{F}_{t-h}]&=P(t)\bar{A}E[x(t)|\mathcal{F}_{t-h}]+P(t)\bar{B}E[p(t)|\mathcal{F}_{t-h}]\\
&\ +P(t)\bar{C}E[q(t)|\mathcal{F}_{t-h}],
\end{aligned}
\end{equation*}
which gives that
\begin{equation*}
\begin{aligned}
E[q(t)|\mathcal{F}_{t-h}]&=[I-\bar{C}P(t)]^{-1}\Big\{P(t)\bar{A}E[x(t)|\mathcal{F}_{t-h}]+P(t)\bar{B}E[p(t)|\mathcal{F}_{t-h}]\Big\}.
\end{aligned}
\end{equation*}
\end{remark}

\begin{remark}\label{remark6}
The matrices of the linear D-FBSDEs can also be time-varying, the solution can be obtained similarly to Theorem \ref{theorem1} in the paper. Considering the time-varying matrices case as follows:
\begin{equation*}
\begin{aligned}
dx(t)&=\Big\{A(t)x(t)+B(t)E[p(t)|\mathcal{F}_{t-h}]+C(t)E[q(t)|\mathcal{F}_{t-h}]\Big\}dt\\
&\ \ +\Big\{\bar{A}(t)x(t)+\bar{B}(t)E[p(t)|\mathcal{F}_{t-h}]+\bar{C}(t)E[q(t)|\mathcal{F}_{t-h}]\Big\}dw(t),\\
dp(t)&=-[A'(t)p(t)+\bar{A}'(t)q(t)+Q(t)x(t)]dt+q(t)dw(t),\\
x(0)&=x_0,\ p(T)=P(T)x(T),
\end{aligned}
\end{equation*}
Then defining
\begin{eqnarray*}
A_k&=&I+\delta {A}(k)+\Delta w_k \bar{A}(k)=\hat{A}(k)+\Delta w_k \bar{A}(k),\\
B_k&=&\delta {B}(k)+\Delta w_k \bar{B}(k)=\hat{B}(k)+\Delta w_k \bar{B}(k),\\
C_k&=&\delta {C}(k)+\Delta w_k \bar{C}(k)=\hat{C}(k)+\Delta w_k \bar{C}(k),\\
D_k&=&I+\delta {D}(k)+\Delta w_k \bar{D}(k)=\hat{D}(k)+\Delta w_k \bar{D}(k).
\end{eqnarray*}
we can obtain the discrete-time form like Lemma \ref{lemma1} then applying the same procedure in this paper, we can quickly obtain the results which given by
\begin{align*}
p(t)&=P(t)x(t)-\int_{0}^{h}P(t,t+\theta)E[x(t)|\mathcal{F}_{t+\theta-h}]d\theta,\  h\leq t \leq T, \\
q(t)&=P(t)\Big\{\bar{A}(t)x(t)+\bar{B}(t)E[p(t)|\mathcal{F}_{t-h}]+\bar{C}(t)E[q(t)|\mathcal{F}_{t-h}]\Big\},\\
\nonumber dx(t)&=\Big\{A(t)x(t)+\Big[B(t)S(t)+C(t)P(t)[I-\bar{C}(t)P(t)]^{-1}[\bar{A}(t)+\bar{B}(t)S(t)]\Big]\\
 &\ \times E[x(t)|\mathcal{F}_{t-h}]\Big\}dt+\Big\{\bar{A}(t)x(t)+\Big[\bar{B}(t)S(t)+\bar{C}(t)P(t)[I-\bar{C}(t)P(t)]^{-1}\\
\nonumber &\ \times[\bar{A}(t)+\bar{B}(t)S(t)]\Big]E[x(t)|\mathcal{F}_{t-h}]\Big\}dw(t).
\end{align*}
where $P(t),P(t,s)$ satifies
\begin{eqnarray*}
-\dot{P}(t)&=&D(t)P(t)+P(t)A(t)+\bar{D}(t) P(t)\bar{A}(t)+Q(t)-P(t,t+h), \\
\nonumber P(t,t)&=&-[I-\bar{C}(t)P(t)]^{-1}\Big[(S(t)B(t)+\bar{D}(t)P(t)\bar{B}(t))S(t)[I-\bar{C}(t)P(t)]\\
& &\ +(S(t)C(t)+\bar{D}(t)P(t)\bar{C}(t))P(t)[\bar{A}(t)+\bar{B}(t)S(t)]\Big],\\
P(t,s)&=&e^{D(t)(s-t)}P(s,s)e^{A(t)(s-t)},
\end{eqnarray*}
Thus the discretization method proposed in this paper can also solve the D-FBSDEs with time-varying matrices and there are no
any additional difficulties.

\end{remark}

\section{Application to Optimal Control Problems with State Transmission Delay}

In this section, we will apply the results to a kind of stochastic LQ problem with state transmission delay where the system is governed by
\begin{equation}\label{remark1}
\begin{aligned}
dx(t)&=[Ax(t)+Bu(t)]dt+[\bar{A}x(t)+\bar{B}u(t)]dw(t),\\
x(0)&=x_0,\\
\end{aligned}
\end{equation}
which $x(t)\in R^n$ is the state, $u(t)\in R^m$ is the control input, $w(t)$ is the one-dimensional standard Brownian motion, $x_0$ is prescribed initial value, and $A,B,\bar{A},\bar{B}$ are constant matrices with compatible dimensions. It is assumed that there exists state transmission delay $h$ from plant to controller, i.e., only $x(t-s), h<s$, is available at time $t$ to design feedback controller $u(t)$. The associated cost function is given as follows:
\begin{equation}\label{remark2}
\begin{aligned}
J_T&=E\Big[\int_{0}^{T}x'(t)Qx(t)dt+\int_{0}^{T}u'(t)Ru(t)dt+x'(T)Hx(T)\Big],
\end{aligned}
\end{equation}
where $Q,R,H$ are positive semi-definite matrices of compatible dimensions. The related problem is given as follows:

The Problem is to find the causal and $\mathcal{F}_{t-h}$-adapted optimal control $u(t)$ to minimize (\ref{remark2}) subject to (\ref{remark1}).

Following the similar arguments as in \cite{Juan2}, we know that the problem is uniquely solvable if and only if there exists a unique $(x(t), p(t), q(t), u(t))$ satisfying the following FBSDEs:
\begin{equation}\label{remark3}
\left\{
\begin{aligned}
dx(t)&=[Ax(t)+Bu(t)]dt+[\bar{A}x(t)+\bar{B}u(t)]dw(t),\\
dp(t)&=-[A'p(t)+\bar{A}'q(t)+Qx(t)]dt+q(t)dw(t),\\
0&=Ru(t)+E[B'p(t)+\bar{B}'q(t)|\mathcal{F}_{t-h}],\\
x(0)&=x_0,\\
p(T)&=Hx(T),
\end{aligned}
\right.
\end{equation}
while $H$ is defined in (\ref{remark2}) and $x(T)$ is the state at time $T$.

It is clearly to see that the core of solving the problem is to solve the D-FBSDEs (\ref{remark3}), which is same as what we considered in this paper. Thus we can obtain the following results by applying the Theorem \ref{theorem1} directly:

\begin{coro}
The problem is uniquely solvable if the following Riccati equations admit a solution such that the matrix $R(t)$ is strictly positive definite:
\begin{eqnarray}
-\dot{P}(t)&=&A'P(t)+P(t)A+\bar{A}'P(t)\bar{A}+Q-P(t,t+h), \label{35}\\
P(t,t)&=&M'(t)R^{-1}(t)M(t),\\
R(t)&=&\bar{B}'P(t)\bar{B}+R,\label{R(t)}\\
M(t)&=&B'P(t)+\bar{B}'P(t)\bar{A}-\int_{0}^{h}B'P(t,s)ds,\label{M(t)}\\
P(t,s)&=&e^{A'(s-t)}P(s,s)e^{A(s-t)}.\label{39}
\end{eqnarray}
In this case, the optimal controller is given by
\begin{equation}
u(t)=-R^{-1}(t)M(t)\hat{x}(t|t-h),\label{opc}
\end{equation}
where $\hat{x}(t|t-h)$ is calculated by
\begin{equation*}
\hat{x}(t|t-h)=
\left\{
\begin{aligned}
&e^{Ah}x(t-h)+\int_{t-h}^{t}e^{A(t-\theta)}Bu(\theta)d\theta,\ t\geq h;\\
&e^{At}x(0)+\int_{0}^{t}e^{A(t-\theta)}Bu(\theta)d\theta,\ t< h;
\end{aligned}
\right.
\end{equation*}
and the solution to D-FBSDEs (\ref{remark3}) is given by
\begin{align}
p(t)&=P(t)x(t)-\int_{0}^{h}P(t,t+\theta)\hat{x}(t|t+\theta-h)d\theta, \label{41}\\
q(t)&=P(t)[\bar{A}x(t)+\bar{B}u(t)],\label{42}
\end{align}
where $\hat{x}(t|s)=E[x(t)|\mathcal{F}_s]$.
\end{coro}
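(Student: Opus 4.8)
The plan is to recognize that after eliminating the control, the system (\ref{remark3}) is a special instance of the general D-FBSDEs (\ref{FBSDE}), so that Theorem \ref{theorem1} applies with a specific choice of coefficient matrices. Since $u(t)$ is $\mathcal{F}_{t-h}$-adapted, the stationarity condition $0=Ru(t)+E[B'p(t)+\bar{B}'q(t)|\mathcal{F}_{t-h}]$ solves to $u(t)=-R^{-1}(B'E[p(t)|\mathcal{F}_{t-h}]+\bar{B}'E[q(t)|\mathcal{F}_{t-h}])$. Substituting this into the forward SDE rewrites its drift and diffusion in exactly the form of (\ref{FBSDE}) under the identifications $D\mapsto A'$, $\bar{D}\mapsto\bar{A}'$, $B\mapsto-BR^{-1}B'$, $C\mapsto-BR^{-1}\bar{B}'$, $\bar{B}\mapsto-\bar{B}R^{-1}B'$, $\bar{C}\mapsto-\bar{B}R^{-1}\bar{B}'$, with $A,\bar{A},Q$ unchanged; the backward equation already matches under $D\mapsto A'$, $\bar{D}\mapsto\bar{A}'$.

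First I would check that the invertibility hypothesis of Theorem \ref{theorem1} is implied by the assumed strict positivity of $R(t)$. Under the substitution $I-\bar{C}P(t)$ becomes $I+\bar{B}R^{-1}\bar{B}'P(t)$, and by Sylvester's determinant identity $\det(I+\bar{B}R^{-1}\bar{B}'P(t))=\det(R^{-1})\det(R+\bar{B}'P(t)\bar{B})=\det(R^{-1})\det R(t)\neq 0$, so it is nonsingular. Then (\ref{pt}) reads off (\ref{41}) verbatim, and (\ref{P(t)}), (\ref{P(t,s)}) become (\ref{35}), (\ref{39}) directly. Equation (\ref{qt}) collapses cleanly: the bracket $\bar{B}E[p|\mathcal{F}_{t-h}]+\bar{C}E[q|\mathcal{F}_{t-h}]$ becomes $-\bar{B}R^{-1}(B'E[p|\mathcal{F}_{t-h}]+\bar{B}'E[q|\mathcal{F}_{t-h}])=\bar{B}u(t)$, so (\ref{qt}) reduces to $q(t)=P(t)(\bar{A}x(t)+\bar{B}u(t))$, which is (\ref{42}).

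To obtain the feedback gain I would take $E[\cdot|\mathcal{F}_{t-h}]$ in (\ref{41}) and (\ref{42}). Since $\theta\ge 0$ gives $\mathcal{F}_{t-h}\subseteq\mathcal{F}_{t+\theta-h}$, the tower property yields $E[p(t)|\mathcal{F}_{t-h}]=S(t)\hat{x}(t|t-h)$ with $S(t)=P(t)-\int_0^hP(t,t+\theta)d\theta$, and (via Remark \ref{remark5}) $E[q(t)|\mathcal{F}_{t-h}]=P(t)(\bar{A}\hat{x}(t|t-h)+\bar{B}u(t))$. Inserting both into the stationarity condition produces the linear equation $(R+\bar{B}'P(t)\bar{B})u(t)=-(B'S(t)+\bar{B}'P(t)\bar{A})\hat{x}(t|t-h)$, that is $R(t)u(t)=-M(t)\hat{x}(t|t-h)$ with $M(t)$ as in (\ref{M(t)}), which is (\ref{opc}). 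The formula for $\hat{x}(t|t-h)$ then follows by conditioning the closed-loop state SDE on $\mathcal{F}_{t-h}$: for $\theta\in[t-h,t]$ the control $u(\theta)$ is $\mathcal{F}_{\theta-h}\subseteq\mathcal{F}_{t-h}$-measurable, so the It\^{o} term has zero conditional mean and $y(\theta)=\hat{x}(\theta|t-h)$ satisfies the deterministic ODE $\dot{y}=Ay+Bu$, whose variation-of-constants solution is the stated expression (with the initial segment treated separately for $t<h$).

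The step I expect to be the main obstacle is verifying that the general boundary relation (\ref{P(t,t)}) collapses to $P(t,t)=M'(t)R^{-1}(t)M(t)$. This requires substituting the six identified matrices into (\ref{P(t,t)}), using the symmetry of $P(t)$ and $S(t)$ (inherited from $P(T)=H$ and propagated through the Riccati system so that $M'(t)=S(t)B+\bar{A}'P(t)\bar{B}$), and repeatedly invoking the push-through identity $[I+\bar{B}R^{-1}\bar{B}'P]^{-1}\bar{B}R^{-1}=\bar{B}R(t)^{-1}$ to pull the resolvent $[I-\bar{C}P]^{-1}$ through the matrix products until everything factors as $M'R(t)^{-1}M$. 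The delicate point is that the general formula is not manifestly symmetric while the target is, so one must confirm that the off-diagonal contributions cancel; the cleanest route may be to bypass (\ref{P(t,t)}) altogether and re-derive the boundary term directly, by applying It\^{o}'s formula to the ansatz (\ref{41}) and matching the result against the backward equation of (\ref{remark3}), exactly as indicated for the verification underlying Theorem \ref{theorem1}.
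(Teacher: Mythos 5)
Your proposal is correct and follows essentially the same route as the paper: substitute the stationarity condition into the forward equation so that (\ref{remark3}) becomes the special case of (\ref{FBSDE}) with $D=A'$, $\bar{D}=\bar{A}'$ and $B,\bar{B},C,\bar{C}$ replaced by $-BR^{-1}B'$, $-\bar{B}R^{-1}B'$, $-BR^{-1}\bar{B}'$, $-\bar{B}R^{-1}\bar{B}'$, apply Theorem \ref{theorem1} to read off (\ref{41})--(\ref{42}), and then recover (\ref{opc}) by conditioning $p(t)$ and $q(t)$ on $\mathcal{F}_{t-h}$ in the stationarity condition. You are in fact somewhat more careful than the paper, which neither supplies the bridge from strict positivity of $R(t)$ to invertibility of $I-\bar{C}P(t)$ (your Sylvester determinant argument) nor verifies the collapse of (\ref{P(t,t)}) into $M'(t)R^{-1}(t)M(t)$ --- the step you rightly single out as the delicate one is simply asserted in the paper without proof.
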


\begin{proof}
Through (\ref{remark3}) and under the condition that $R>0$, we can rewrite (\ref{remark3}) as follows:
\begin{equation}\label{remark4}
\left\{
\begin{aligned}
dp(t)&=-[A'p(t)+\bar{A}'q(t)+Qx(t)]dt+q(t)dw(t),\\
dx(t)&=\Big\{Ax(t)-BR^{-1}E[B'p(t)+\bar{B}'q(t)|\mathcal{F}_{t-h}]\Big\}dt\\
&\ +\Big\{\bar{A}x(t)-\bar{B}R^{-1}E[B'p(t)+\bar{B}'q(t)|\mathcal{F}_{t-h}]\Big\}dw(t),\\
x(0)&=x_0,\\
p(T)&=P(T)x(T).
\end{aligned}
\right.
\end{equation}
which can be seen as the special case of (\ref{FBSDE}). i.e. $D=A'$, $\bar{D}=\bar{A}'$ and $B,\bar{B},C,\bar{C}$ are replaced by $-BR^{-1}B',-\bar{B}R^{-1}B',-BR^{-1}\bar{B}'$ and $-\bar{B}R^{-1}\bar{B}'$.

In this situation, the related Riccati equations (\ref{P(t)})-(\ref{P(t,s)}) are become as (\ref{35})-(\ref{39}). Then introducing these specific matrices in Theorem \ref{theorem1}, we have:
\begin{align*}
p(t)=P(t)x(t)-\int_{0}^{h}P(t,t+\theta)\hat{x}(t\mid t+\theta-h)d\theta,
\end{align*}
and
\begin{eqnarray*}
q(t)&=&P(t)\Big\{\bar{A}x(t)-\bar{B}R^{-1}B'E[p(t)|\mathcal{F}_{t-h}]-\bar{B}R^{-1}\bar{B}'E[q(t)|\mathcal{F}_{t-h}]\Big\},
\end{eqnarray*}
combining the equilibrium condition in (\ref{remark3}), i.e, $u(t)=-R^{-1}E[B'p(t)+\bar{B}'q(t)|\mathcal{F}_{t-h}]$, we have
\begin{eqnarray*}
q(t)&=&P(t)[\bar{A}x(t)+\bar{B}u(t)],
\end{eqnarray*}
where $P(t),P(t,s)$ satisfies (\ref{35})-(\ref{39}). Thus we have verified (\ref{41}) and (\ref{42}).

Next combining $p(t),q(t)$ and the equilibrium condition, we can obtain that
\begin{equation*}
\begin{aligned}
0&=Ru(t)+E[B'p(t)+\bar{B}'q(t)|\mathcal{F}_{t-h}]\\
&=Ru(t)+E\Big\{B'[P(t)x(t)-\int_{0}^{h}P(t,t+\theta)\hat{x}(t|t+\theta-h)d\theta]\\
&\ +\bar{B}'P(t)[\bar{A}x(t)+\bar{B}u(t)]|\mathcal{F}_{t-h}\Big\},
\end{aligned}
\end{equation*}
then combining (\ref{R(t)}) and (\ref{M(t)}), it gives that
\begin{equation*}
u(t)=-R^{-1}(t)M(t)\hat{x}(t|t-h).
\end{equation*}
Thus we can obtain that the optimal controller is as (\ref{opc}). This completes the proof.
\end{proof}
\begin{remark}
Using the method of discretizaiton, we present a different way which is more intuitive to solve this FBSDEs. And it can be seen that solving the D-FBSDEs is useful for LQ control problems. Besides, this method is not only for the single input delay, but can be also applied in the problem with multiple input delay.
\end{remark}

\section{Conclusions}
\label{sec:conclusions}
In this paper, we studied the solution of the continuous-time D-FBSDEs, and the solution has been obtained by using the method of discretization. First, we transform the continuous-time D-FBSDEs into the discrete-time form, in this situation, we can obtain the solution of this discrete-time form. Next we obtain the solution of the continuous-time D-FBSDEs by limiting the results of discrete-time case.

It is believed that the proposed method of discretization is powerful in deriving the explicit solution for delayed FBSDEs in particularly the general case, and thus provides complete solution to the complicated optimal control problem for linear stochastic delayed systems.

\section{Appendices}
In this section, we verify that the derived solution (\ref{pt})-(\ref{x(t)}) indeed satisfies the D-FBSDEs (\ref{FBSDE}) by using the continuous-time technique. The detailed derivations are given as follows.

First, we define
\begin{align}
\hat{p}(t)&=P(t)x(t)-\int_{0}^{h}P(t,t+\theta)E[x(t)|\mathcal{F}_{t+\theta-h}]d\theta,\label{hpt}\\ \hat{q}(t)&=P(t)\Big\{\bar{A}x(t)+\bar{B}E[\hat{p}(t)|\mathcal{F}_{t-h}]+\bar{C}E[\hat{q}(t)|\mathcal{F}_{t-h}]\Big\},\label{hqt}
\end{align}
where $x(t)$ is defined by (\ref{x(t)}). Using the same procedure introduced in Remark \ref{remark5}, we can obtain that
\begin{align}
E[\hat{q}(t)|\mathcal{F}_{t-h}]=[I-\bar{C}P(t)]^{-1}\Big\{P(t)\bar{A}E[x(t)|\mathcal{F}_{t-h}]+P(t)\bar{B}E[\hat{p}(t)|\mathcal{F}_{t-h}]\Big\}.\label{ehqt}
\end{align}
Then we will verify that (\ref{pt})-(\ref{x(t)}) is the solution to (\ref{FBSDE}). Considering (\ref{x(t)}) with $\displaystyle S(t)=P(t)-\int_{0}^{h}P(t,t+\theta)d\theta$, we have
\begin{align*}
dx(t)&=\Big\{Ax(t)+\Big[B(P(t)-\int_{0}^{h}P(t,t+\theta)d\theta)+CP(t)[I-\bar{C}P(t)]^{-1}[\bar{A}\\
&\ +\bar{B}(P(t)-\int_{0}^{h}P(t,t+\theta)d\theta)]\Big]E[x(t)|\mathcal{F}_{t-h}]\Big\}dt\\
&\ +\Big\{\bar{A}x(t)+\Big[\bar{B}(P(t)-\int_{0}^{h}P(t,t+\theta)d\theta)+\bar{C}P(t)[I-\bar{C}P(t)]^{-1}\\
&\ \times[\bar{A}+\bar{B}(P(t)-\int_{0}^{h}P(t,t+\theta)d\theta)]\Big]E[x(t)|\mathcal{F}_{t-h}]\Big\}dw(t).
\end{align*}
This gives that
\begin{align*}
dx(t)&=\Big\{Ax(t)+BE[P(t)x(t)-\int_{0}^{h}P(t,t+\theta)E[x(t)|\mathcal{F}_{t+\theta-h}]d\theta|\mathcal{F}_{t-h}]\\
&+C[I-\bar{C}P(t)]^{-1}\Big[P(t)\bar{A}E[x(t)|\mathcal{F}_{t-h}]+P(t)\bar{B}E[P(t)x(t)\\
&-\int_{0}^{h}P(t,t+\theta)E[x(t)|\mathcal{F}_{t+\theta-h}]d\theta|\mathcal{F}_{t-h}]\Big]\Big\}dt\\
&+\Big\{\bar{A}x(t)+\bar{B}E[P(t)x(t)-\int_{0}^{h}P(t,t+\theta)E[x(t)|\mathcal{F}_{t+\theta-h}]d\theta|\mathcal{F}_{t-h}]\\
&+\bar{C}[I-\bar{C}P(t)]^{-1}\Big[P(t)\bar{A}E[x(t)|\mathcal{F}_{t-h}]+P(t)\bar{B}E[P(t)x(t)\\
&-\int_{0}^{h}P(t,t+\theta)E[x(t)|\mathcal{F}_{t+\theta-h}]d\theta|\mathcal{F}_{t-h}]\Big]\Big\}dw(t).\\
\end{align*}
By combining (\ref{hpt}) and (\ref{ehqt}), we obtain that
\begin{equation}\label{dhxt}
\begin{aligned}
dx(t)&=\Big\{Ax(t)+BE[\hat{p}(t)|\mathcal{F}_{t-h}]+CE[\hat{q}(t)|\mathcal{F}_{t-h}]\Big\}dt\\
&\ \ +\Big\{\bar{A}x(t)+\bar{B}E[\hat{p}|\mathcal{F}_{t-h}]+\bar{C}E[\hat{q}(t)|\mathcal{F}_{t-h}]\Big\}dw(t).\\
\end{aligned}
\end{equation}

Next, from (\ref{dhxt}) and (\ref{P(t)})-(\ref{P(t,s)}), by applying It\^o formula to $d[P(t)x(t)]$, we have
\begin{align}
d[P(t)x(t)]&=dP(t)x(t)+P(t)dx(t)+dP(t)dx(t)\nonumber\\
&=-\Big[DP(t)+P(t)A+\bar{D}P(t)\bar{A}+Q-P(t,t+h)\Big]x(t)\nonumber\\
&\ +P(t)\Big\{Ax(t)+BE[\hat{p}(t)|\mathcal{F}_{t-h}]+CE[\hat{q}(t)|\mathcal{F}_{t-h}]\Big\}dt\nonumber\\
&\ +P(t)\Big\{\bar{A}x(t)+\bar{B}E[\hat{p}(t)|\mathcal{F}_{t-h}]+\bar{C}E[\hat{q}(t)|\mathcal{F}_{t-h}]\Big\}dw(t),\label{a1}
\end{align}
by using the definition $\hat{x}(t|s)=E[x(t)|\mathcal{F}_s]$, $\displaystyle d\int_{0}^{h}P(t,t+\theta)E[x(t)|\mathcal{F}_{t+\theta-h}]d\theta$ is calculated as
\begin{align*}
&d\int_{0}^{h}P(t,t+\theta)E[x(t)|\mathcal{F}_{t+\theta-h}]d\theta\\
&=d\int_{t}^{t+h}P(t,s)\hat{x}(t|s-h)ds\\
&=\int_{t}^{t+h}\partial_t [P(t,s)\hat{x}(t|s-h)]ds+P(t,t+h)x(t)-P(t,t)\hat{x}(t|t-h)\\
&=\int_{t}^{t+h}[\partial_t P(t,s)]\hat{x}(t|s-h)ds+\int_{t}^{t+h}P(t,s)\partial _t\hat{x}(t|s-h)ds\\
&~~~ +P(t,t+h)x(t)-P(t,t)\hat{x}(t|t-h).
\end{align*}
Noting that
\begin{align*}
\int_{t}^{t+h}[\partial _tP(t,s)]\hat{x}(t|s-h)ds=\int_{t}^{t+h}\Big(\partial _t[e^{D(s-t)}P(s,s)e^{A(s-t)}]\Big)\hat{x}(t|s-h)ds,
\end{align*}
where $P(t,s)=e^{D(s-t)}P(s,s)e^{A(s-t)}$ has been used,
we thus have
\begin{align}
&d\int_{0}^{h}P(t,t+\theta)E[x(t)|\mathcal{F}_{t+\theta-h}]d\theta\nonumber\\
&=-D\Bigg[\int_{t}^{t+h}P(t,s)\hat{x}(t|s-h)ds\Bigg]dt+\Bigg[\int_{t}^{t+h}P(t,s)ds\Bigg]\Big[BE[\hat{p}(t)|\mathcal{F}_{t-h}]\nonumber\\
&~~~+CE[\hat{q}(t)|\mathcal{F}_{t-h}\Big]dt+P(t,t+h)x(t)-P(t,t)\hat{x}(t|t-h).\label{a2}
\end{align}
From (\ref{a1}) and (\ref{a2}), it is obtained that
\begin{align*}
&d[P(t)x(t)-\int_{0}^{h}P(t,t+\theta)E[x(t)|\mathcal{F}_{t+\theta-h}]d\theta]\\
=&-\Big[DP(t)+P(t)A+\bar{D}P(t)\bar{A}+Q-P(t,t+h)\Big]x(t)dt\\
&+P(t)\Big\{Ax(t)+BE[\hat{p}(t)|\mathcal{F}_{t-h}]+CE[\hat{q}(t)|\mathcal{F}_{t-h}]\Big\}dt\\
&+P(t)\Big\{\bar{A}x(t)+\bar{B}E[\hat{p}(t)|\mathcal{F}_{t-h}]+\bar{C}E[\hat{q}(t)|\mathcal{F}_{t-h}]\Big\}dw(t)\\ &+D\Bigg[\int_{t}^{t+h}P(t,s)\hat{x}(t|s-h)ds\Bigg]dt\\
&-\Bigg[\int_{t}^{t+h}P(t,s)ds\Bigg]\Big[BE[\hat{p}(t)|\mathcal{F}_{t-h}]+CE[\hat{q}(t)|\mathcal{F}_{t-h}\Big]dt-P(t,t+h)x(t)dt\\
&+P(t,t)\hat{x}(t|t-h)dt.\\
=&-D\Bigg[P(t)x(t)-\int_{t}^{t+h}P(t,s)\hat{x}(t|s-h)ds\Bigg]dt-Qx(t)dt\\
&-\bar{D}\Big\{P(t)\big[\bar{A}x(t)+\bar{B}E[\hat{p}(t)|\mathcal{F}_{t-h}]+\bar{C}E[\hat{q}(t)|\mathcal{F}_{t-h}]\big]\Big\}dt\\
&+P(t)\Big\{\bar{A}x(t)+\bar{B}E[\hat{p}(t)|\mathcal{F}_{t-h}]+\bar{C}E[\hat{q}(t)|\mathcal{F}_{t-h}]\Big\}dw(t)\\
&+\Big\{(P(t)B+\bar{D}P(t)\bar{B})E[\hat{p}(t)|\mathcal{F}_{t-h}]+(P(t)C+\bar{D}P(t)\bar{C})E[\hat{q}(t)|\mathcal{F}_{t-h}]\Big\}dt\\
&-\Bigg[\int_{t}^{t+h}P(t,s)ds\Bigg]\Big[BE[\hat{p}(t)|\mathcal{F}_{t-h}]+CE[\hat{q}(t)|\mathcal{F}_{t-h}\Big]dt+P(t,t)\hat{x}(t|t-h)dt.
\end{align*}
Then combining $\displaystyle S(t)=P(t)-\int_{0}^{h}P(t,t+\theta)d\theta$ and $P(t,t)$ into the above equation, it follows that
\begin{align}\label{dhpt}
d\hat{p}(t)&=\nonumber d[P(t)x(t)-\int_{0}^{h}P(t,t+\theta)E[x(t)|\mathcal{F}_{t+\theta-h}]d\theta]\\
&=-[D\hat{p}(t)+\bar{D}\hat{q}(t)+Qx(t)]dt+\hat{q}(t)dw(t),
\end{align}
and
\begin{align}\label{dhqt}
\hat{q}(t)&=P(t)\Big\{\bar{A}x(t)+\bar{B}E[\hat{p}(t)|\mathcal{F}_{t-h}]+\bar{C}E[\hat{q}(t)|\mathcal{F}_{t-h}]\Big\}.
\end{align}

By making comparison between (\ref{dhxt}), (\ref{dhpt}), (\ref{dhqt}) and (\ref{FBSDE}) and combining with the same terminal condition, it is obtained that
(\ref{dhxt}), (\ref{dhpt}), (\ref{dhqt}) are the same as (\ref{FBSDE}), that is, (\ref{pt})-(\ref{x(t)}) is the solution to the D-FBSDEs (\ref{FBSDE}). This completes the proof.

\end{document}